\documentclass[12pt]{article}
\usepackage[english]{babel}    
\usepackage[latin1]{inputenc}
\usepackage{amsmath,amssymb,,amsthm,latexsym}
\usepackage{enumerate}
\usepackage{mathrsfs}
\usepackage{xfrac}
\setlength{\textwidth}{16.8cm}
\setlength{\textheight}{22.2cm}
\setlength{\topmargin}{-1cm}
\setlength{\evensidemargin}{-0.5cm}
\setlength{\oddsidemargin}{-0.5cm}

\newcommand{\algA}{\mathcal{A}}
\newcommand{\A}{\mathcal{A}}

\newcommand{\F}{F}

\newtheorem{teo}{Theorem}
\newtheorem{Le}{Lemma}

\newtheorem{Ex}{Example}
\newtheorem{Que}{Question}
\newtheorem{Con}{Conjecture}

\DeclareMathOperator{\Ele}{Ele}

\DeclareMathOperator{\rank}{rank}

\hyphenation{power-asso-cia-ti-ve}
\DeclareMathOperator{\pf}{pf}
\DeclareMathOperator{\diag}{diag}
\DeclareMathOperator{\Id}{Id}
\DeclareMathOperator{\adj}{adj}
\DeclareMathOperator{\Aut}{Aut}

\DeclareMathOperator{\Skew}{Skew}
\title{Antisymetric matrices and Lotka-Volterra algebras}
\author{Juan C. Gutierrez Fernandez\\
Instituto de Matem\'atica e Estat\'{\i}stica, Universidade de S\~ao Paulo,\\ Rua do Mat\~ao, 1010, CEP 05508-090, S\~ao Paulo, Brazil \\
e-mail: jcgf@ime.usp.br
\\ and \\
Claudia I. Garcia\\
EACH, Universidade de São Paulo, CEP 03828-000, São Paulo, Brazil\\ e-mail: claudiag@usp.br}
\date{}

\begin{document}
\maketitle

\begin{abstract}
The purpose of this paper is to study the structure of  Lotka-Volterra algebras, the set of their idempotent elements  and their group of automorphisms. These algebras are defined through  antisymmetric matrices and  they emerge in connection with biological  problems and  Lotka-Volterra systems for the interactions of neighboring individuals.

\end{abstract}

MSC: 17D92, 15A06, 92B05, 92D25

Keyword: Lotka-Volterra system,  Lotka-Volterra algebra, derivation.


\section{Introduction and Preliminaries} 
Let $\algA$ be a commutative algebra (not necessary associative) over a field $F$ of characteristic different from 2.  The algebra $\A$ is \it baric \rm if it admits a nontrivial homomorphism $\omega \colon \A \longrightarrow \F$. The homomorphism $\omega$ is called \it weight homomorphism\rm. The \it hiperplane unit \rm and the \it barideal \rm of $\A$ are defined by $H=\{ x\in \A \,\, | \, \,\omega(x)=1\}$ and $
N= \ker \omega = \{ x\in \A \,\, | \,\, \omega(x)=0\}$,
respectively.
The algebra $\A$ is called a Lotka-Volterra  algebra if it admits a basis $\Phi=\{e_1,\ldots,e_n\}$ and an antisymmetric matrix $A=(a_{ij})$ with $n$ rows and $n$ columns where the  entries $a_{ij}$ are in the field $F$ such that 
$e_ie_j = (\sfrac{1}{2} + a_{ij}) e_i + (\sfrac{1}{2} + a_{ji}) e_j$
for all $i,j\in \{1,2,\ldots,n\}$.   Such a commutative algebra is called \it Lotka-Volterra algebra \rm associated to the antisymmetric matrix $A$ with respect to the basis $\Phi=\{e_1,\ldots,e_n\}$. The basis $\Phi$ is called \it natural \rm basis of $\A$. Some results about its structure can be found in \cite{Gani,Itoh,Yoon0,Yoon,Micali0,Micali,Micali1}. We can easily see that  every Lotka-Volterra algebra is baric with weight homomorphism defined by  $$\omega\Big(\sum_i\lambda_i e_i\Big) = \sum_i \lambda_i.$$ 
For example,  Lokta-Volterra algebras associated to the zero matrix are also called  normal Bernstein algebras. These algebras are essential in solving the classical Bernstein's problem, as well as in the genetic and biological interpretation of Bernstein operators (see \cite{Lybook,Holgate,Juan,Juan1}). On the other hand, these algebras emerge in connection with a Lotka-Volterra system for the interactions of neighboring individuals. Following Kimura \cite{Kimura} and Mather \cite{Mather}, the system for binary interaction is represented by 
$\frac{d}{d t } p_i(t) = p_i(t)\sum_{j=1}^n a_{ij} p_j(t)$, for $t\geq t_0$, where $a_{ij}+a_{ji}=1$,
$p_i(t_0)>0$, $ \sum_{j=1}^n p_j(t_0)=1$ for   $i=1,2,\ldots,n.$ Previous quadratic differential equations  can be  represented by 
$\frac{d}{d t} p(t)= \frac{1}{2}\Big(p(t)\circ p(t) - p(t)\Big)$
where $(\A,\circ)$ is the $n$-dimensional Lotka-Volterra algebra associated to the antisymmetric matrix $(a_{ij})$. In book \cite{Walcher}, we can find some  connections between quadratic differential systems and the corresponding nonassociative algebra.

From a  mathematical  and biological perspective, a very interesting question  about Lotka-Volterra algebras is whether or not their weight functions are uniquely determined. It is well know that when $N$ is nil, then the weight function is uniquely determined (see \cite{Reed, Busekros}). In order to exhibit at least a sufficient condition for a Lotka-Volterra algebra $\A$ to have a unique weight function, we must first discuss the issue of idempotent elements. 
In \cite{Yoon0} and \cite{Micali}  the authors start by studying the set $\text{Id}(\A)$, of  nonzero idempotent elements of $\A$. Firstly, we see  that every idempotent element $e$ of $\A$ has weight either $0$ or $1$ since $\omega(e) = \omega(e^2) = \omega(e)^2$. Assume now that the Lotka-Volterra algebra $\A$ has another  nonzero weight homomorphism $\chi$. Because every idempotent element of $\A$ has weight either 0 or 1, the homomorphism $\chi$ determines a partition of  $\Phi$ into two nonempty subsets  $I= \Phi \cap \ker \chi$ and $J= \{ e_j \in \Phi \, | \chi(e_j)=1\}$. For every $e_i\in I$ and $e_j\in J$ we have that $\chi(e_ie_j) = \chi(e_i)\chi(e_j) = 0\cdot 1= 0$ and hence 
$e_ie_j = e_i$. This proves  first part of the following lemma (see \cite{Gani}).
\begin{Le} \label{l1} Let $\A$ be a Lotka-Volterra algebra associated to an antisymmetric matrix $(a_{ij})$ with respect to the basis $\Phi=\{e_1,\ldots, e_n\}$. The weight homomorphism $\omega$ of $\A$ is uniquely determined,   if and only if the basis $\Phi$ does not  have a  nontrivial partition $\Phi = I \cup J$ where $e_ie_j = e_i$ for all $i\in I$ and  $j\in J$.
\end{Le}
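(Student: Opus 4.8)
The statement has two directions. The paper has already given the hard direction in disguise: if $\A$ has a second weight homomorphism $\chi \neq \omega$, then since every idempotent — and in particular every natural basis vector $e_i$, which satisfies $e_i^2 = e_i$ — has $\chi$-weight $0$ or $1$, the map $\chi$ splits $\Phi$ into $I = \Phi\cap\ker\chi$ and $J = \{e_j : \chi(e_j)=1\}$. Both are nonempty: $J\neq\emptyset$ because $\chi$ is nontrivial, and $I\neq\emptyset$ because otherwise $\chi(e_i)=1=\omega(e_i)$ for all $i$, forcing $\chi=\omega$. The computation $\chi(e_ie_j)=0$ combined with $e_ie_j = (\tfrac12+a_{ij})e_i+(\tfrac12+a_{ji})e_j$ and the linear independence of $e_i,e_j$ forces $\tfrac12+a_{ji}=0$, hence $\tfrac12+a_{ij}=1-(\tfrac12+a_{ji})=1$ by antisymmetry ($a_{ij}=-a_{ji}$), so indeed $e_ie_j = e_i$. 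That gives the nontrivial partition with the stated multiplication rule, proving the contrapositive of one implication: if $\omega$ is \emph{not} unique, the bad partition exists.

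For the converse, suppose $\Phi = I\cup J$ is a nontrivial partition (both parts nonempty, $I\cap J=\emptyset$) with $e_ie_j=e_i$ for all $i\in I$, $j\in J$. I must manufacture a weight homomorphism $\chi\neq\omega$. The natural candidate is $\chi(\sum_k\lambda_k e_k) = \sum_{e_k\in J}\lambda_k$, i.e. the "sum of coordinates supported on $J$". It is clearly linear, nonzero (since $J\neq\emptyset$), and distinct from $\omega$ (since $I\neq\emptyset$: evaluate on some $e_i$, $i\in I$, getting $0\neq 1$). The only thing to check is multiplicativity, i.e. $\chi(e_ke_\ell)=\chi(e_k)\chi(e_\ell)$ for all basis pairs $k,\ell$, which by bilinearity suffices. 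This splits into cases by whether each of $e_k,e_\ell$ lies in $I$ or $J$.

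The case analysis is the only real work, and it is short. Write $e_ke_\ell = (\tfrac12+a_{k\ell})e_k+(\tfrac12+a_{\ell k})e_\ell$, so $\chi(e_ke_\ell)$ equals the sum of the coefficients attached to basis vectors lying in $J$; also recall $(\tfrac12+a_{k\ell})+(\tfrac12+a_{\ell k})=1$. (i) $k,\ell\in I$, $k\neq\ell$: neither $e_k$ nor $e_\ell$ is in $J$, so $\chi(e_ke_\ell)=0=0\cdot 0$. (ii) $k,\ell\in J$, $k\neq\ell$: both coefficients count, and they sum to $1 = 1\cdot 1$. (iii) $k\in I$, $\ell\in J$: by hypothesis $e_ke_\ell=e_k$, and $e_k\notin J$, so $\chi(e_ke_\ell)=0 = 0\cdot 1$; symmetrically for $k\in J$, $\ell\in I$. (iv) $k=\ell$: then $e_k^2=e_k$, and $\chi(e_k)^2=\chi(e_k)$ since $\chi(e_k)\in\{0,1\}$, so $\chi(e_k^2)=\chi(e_k)=\chi(e_k)^2$. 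Hence $\chi$ is a homomorphism distinct from $\omega$, so $\omega$ is not unique. I don't anticipate any genuine obstacle; the subtle points are merely (a) remembering that $I$ nonempty is exactly what makes $\chi\neq\omega$, and (b) using antisymmetry to get $(\tfrac12+a_{k\ell})+(\tfrac12+a_{\ell k})=1$ in case (ii), which is the mechanism behind baricity in the first place.
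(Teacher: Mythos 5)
Your proof is correct and, for the direction the paper actually writes out, it is essentially the paper's own argument: a second weight homomorphism $\chi$ assigns each idempotent $e_i$ the value $0$ or $1$, yielding the nonempty parts $I=\Phi\cap\ker\chi$ and $J$, and $\chi(e_ie_j)=0$ forces $\tfrac12+a_{ji}=0$, hence $e_ie_j=e_i$. The converse, which the paper omits (deferring to a reference), you supply correctly with the natural candidate $\chi\bigl(\sum_k\lambda_k e_k\bigr)=\sum_{e_k\in J}\lambda_k$ and a complete case check of multiplicativity on basis pairs, so your write-up is in fact more self-contained than the paper's.
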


The paper is organized as follows. After this introductory section we first
recall some known results about idempotent elements of a Lotka-Volterra algebra and next we describe the set  of all idempotent element of Lotka-Volterra algebras with dimension 3  and 4. Using properties of antisymmetric matrices and their pfaffians we get new  results about the idempotent elements of generic Lotka-Volterra algebras. In Section 3, we give new properties about  the group of automorphisms of a Lotka-Volterra algebra and finally we  describe in Section 4 the set of all the automorphisms of a Lotka-Volterra algebra with dimension 3.

\section{Idempotent elements}
For the structure of an algebra, the existence of an idempotent element provides a direct sum decomposition of the algebra. Furthermore, from a biological aspect, idempotent elements  also may  have genetic or hereditary significance (see  \cite{Reed} and \cite{Busekros}).
In the following, we will denote by $\text{Id}_k(\A)$ the set $\{ x\in \text{Id}(\A) \,\,|\,\, \omega(x)=k\}$ for $k=0,1$.  Obviously,  $\text{Id}(\A)$ is the disjoint union of the subsets   $\text{Id}_0(\A)$ and $\text{Id}_1(\A)$. By  Proposition 1.1 of \cite{Yoon}, for $x = \sum_{i=1}^n \lambda_i e_i$ and $y= \sum_{i=1}^n \mu_i e_i$ where  $\lambda_i,\mu_i\in F$,  the product $xy$ can be written as follows 
$$xy = \frac{1}{2}\Big(\omega(y)x + \omega(x)y\Big) + \sum_{i=1}^n (\lambda_i \omega_i(y) + \mu_i\omega_i(x)) e_i,$$
where $\omega_i: \A \longrightarrow F$ is a linear mapping defined by $\omega_i(e_j) = a_{ij} $ for $j=1,2,\ldots,n$. In particular, for  $x=y$ we have the following relation
\begin{equation}\label{x2}
x^2 = \omega(x) x + 2 \sum_{i=1}^n \lambda_i\omega_i(x) e_i.
\end{equation}
The \it  support \rm  of   $x$  with respect to the basis $\{ e_1,\ldots, e_n\}$, denoted by $\text{supp}(x)$, is the set of indices for which the corresponding coefficient is nonzero, that is, $e_i\in \text{supp}(x)$ if and only if $\lambda_i\ne 0$. Using relation (\ref{x2}),  we can reduce the problem of determining the idempotent elements   of $\A$ to solving  the following two linear systems:
\begin{equation}\label{id0} x\in \text{Id}_0(\A) \Longleftrightarrow
 \omega(x) =0\text{ and }
 \omega_i(x)=\frac{1}{2} \, \text{  for all } i\in  \text{supp}(x).
 \end{equation}
 and 
\begin{equation} \label{id1} x\in \text{Id}_1(\A) \Longleftrightarrow
 \omega(x) =1\text{ and }
 \omega_i(x)=0 \, \text{  for all } i\in  \text{supp}(x).
  \end{equation}
 Now we will analyze particular cases and Lotka-Voltera algebras with small dimension. If $A$ is the null matrix, then $\A$ is also called normal Bernstein algebra and every element of weight 1 is an idempotent element; furthermore, $\A$ has  no nonzero idempotent elements with   zero weight. Assume now that $a_{ij} = a$ with $a\ne 0$ for all $i,j$ with $1\leq i<j \leq n$. Following \cite{Micali}, for every nonempty set $I=\{p_1,\ldots,p_k\}$  where  $1\leq p_1<p_2<\cdots <p_k \leq n$, the algebra $\A$ has exactly one idempotent element with support equal to the set $I$. This idempotent element is equal to 
  $\frac{1}{2a}\sum_{i=1}^k (-1)^i e_{p_i}$ if $k$ is even and it is equal to  $\sum_{i=1}^k (-1)^{i+1} e_{p_i}$ if $k$ is odd. In particular, $\A$ has exactly $2^n$ idempotent elements.

Let now $\A$ be a Lotka-Volterra algebra associated to an $n\times n$ antisymmetric matrix $A=(a_{ij})$. Let $I=\{p_1,\ldots,p_k\}$  be a subset of $\{1,2,\ldots, n\}$ with $k$ elements where  $1\leq p_1<p_2<\cdots <p_k \leq n$. Denote by $A_I$, the submatrix  $k\times k$ of  $(a_{ij})$ given by $(a_{p_ip_j})$. If the determinant of $A_I$ is different from zero, then we can affirm, from (\ref{id0}) and (\ref{id1}), that $\A$ has at most one idempotent element with weight zero and support $I$, and also $\A$ does not   have  idempotent elements with weight 1 and support $I$. Take now $i$ and $j$, two  positive integers where $1\leq i<j\leq n$. By \cite{Yoon0} and \cite{Micali}, 
 for $a_{ij}=0$ we have that  every element of the form $\alpha e_i + \beta e_j$ with $\alpha +\beta = 1$ is an idempotent element of $\A$, and $\A$ does have no nonzero idempotent elements with support $\{i,j\}$ and weight 0. In the case $a_{ij}\ne 0$, $\A$ does have no idempotent elements with weight 1 and exactly one idempotent element with  weight 0  and support $\{i,j\}$, it is given  by
$$\frac{1}{2a_{ij}}(-e_i+e_j).$$

Using relations (\ref{id0}) and (\ref{id1}) we can easily prove the following. 
\begin{Le} \label{l2} Let  $\A$ be a Lotka-Volterra algebra associated to an $n\times n$ antisymmetric matrix $A=(a_{ij})$ aver a field with at least 4 elements (and characteristic $\ne 2$). Take $I=\{i,j,k\}$ where $1\leq i<j<k\leq n$ and  $a_{ij}\ne 0$. Then $A_I$ is of the form
$$A_I= \left[\begin{array}{ccc}0&a&b\\-a&0&c\\-b&-c&0\end{array}\right]
$$ with $a\ne 0$. We have the following properties.
\begin{enumerate}[(i)]
	\item $\A$ has  idempotent elements with weight 1  and support $I$ if and only if $b\ne a+c$ and $abc\ne 0$. In this case, $\A$ has exactly one  idempotent element with support $I$ and it  is equal to 
	$$\frac{1}{a+c-b}\big(c e_i-be_j+ae_k\big).$$
\item $\A$ has  idempotent elements with weight 0  and support $I$ if and only if $b=a+c$. This set of idempotent elements is equal to  ($F^*=F\setminus \{0\}$)
$$ \Big\{ \frac{2c\lambda -1}{2a}e_i +\frac{1-2b\lambda}{2a}e_j + \lambda e_k \,\, | \,\, \,2c\lambda \ne 1, \, 2b\lambda\ne 1  \text{ and } \lambda \in F^*\Big\}.$$ 
\end{enumerate}
\end{Le}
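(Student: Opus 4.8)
The plan is to work directly from the two linear characterizations \eqref{id0} and \eqref{id1}, specialized to support $I=\{i,j,k\}$. Write a candidate idempotent as $x=\alpha e_i+\beta e_j+\gamma e_k$ with $\alpha,\beta,\gamma$ all nonzero (so that $\supp(x)=I$), and record the three linear functionals restricted to this subspace: $\omega_i(x)=a\beta+b\gamma$, $\omega_j(x)=-a\alpha+c\gamma$, and $\omega_k(x)=-b\alpha-c\beta$, together with $\omega(x)=\alpha+\beta+\gamma$. Everything then reduces to solving two small inhomogeneous $3\times 3$ systems and reading off when the solutions have full support.

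For part (i) the system is $\alpha+\beta+\gamma=1$, $a\beta+b\gamma=0$, $-a\alpha+c\gamma=0$, $-b\alpha-c\beta=0$. First I would note that the coefficient matrix of the last three equations is exactly $A_I$ augmented by the row $(1,1,1)$; since $A_I$ is $3\times 3$ antisymmetric its determinant is $0$, so the three homogeneous-type equations $a\beta+b\gamma=0$, $-a\alpha+c\gamma=0$, $-b\alpha-c\beta=0$ have a one-dimensional solution space, and one checks the solution line is spanned by $(c,-b,a)$ (e.g. from $a\beta+b\gamma=0$ and $-a\alpha+c\gamma=0$ one gets $\beta=-b\gamma/a$, $\alpha=c\gamma/a$, and the third equation is then automatically satisfied because $-b(c\gamma/a)-c(-b\gamma/a)=0$). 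Imposing the weight-$1$ normalization $\alpha+\beta+\gamma=1$ forces the scalar multiple to be $\gamma=a/(a+c-b)$, giving the claimed element $\frac{1}{a+c-b}(ce_i-be_j+ae_k)$, provided $a+c-b\neq 0$. The support condition $\alpha\beta\gamma\neq 0$ translates to $abc\neq 0$; combining this with $b\neq a+c$ gives the stated iff, and the solution is unique because the solution line was one-dimensional.

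For part (ii) the relevant system from \eqref{id0} is $\alpha+\beta+\gamma=0$ together with $\omega_i(x)=\omega_j(x)=\omega_k(x)=\tfrac12$, i.e. $a\beta+b\gamma=\tfrac12$, $-a\alpha+c\gamma=\tfrac12$, $-b\alpha-c\beta=\tfrac12$. Here I would use $a\neq 0$ to solve the first two equations for $\alpha$ and $\beta$ in terms of the free parameter $\lambda:=\gamma$: from $a\beta+b\gamma=\tfrac12$ we get $\beta=\frac{1-2b\lambda}{2a}$, and from $-a\alpha+c\gamma=\tfrac12$ we get $\alpha=\frac{2c\lambda-1}{2a}$. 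The third equation $-b\alpha-c\beta=\tfrac12$ and the weight equation $\alpha+\beta+\gamma=0$ must both hold; substituting the expressions for $\alpha,\beta$ into either one yields, after clearing denominators, precisely the condition $b=a+c$ (the two conditions become the same linear relation, which is the pleasant algebraic coincidence driving the whole statement). So when $b\neq a+c$ there are no weight-$0$ idempotents with support $I$, and when $b=a+c$ the solution set is the one-parameter family displayed, with the side conditions $\lambda\in F^*$, $2c\lambda\neq 1$, $2b\lambda\neq 1$ exactly expressing $\gamma\neq 0$, $\alpha\neq 0$, $\beta\neq 0$ respectively (note $\lambda\neq 0$ also matches $\beta\neq \frac{1}{2a}$ being automatic unless... one should double-check the $a=b$ or $c=0$ degeneracies here).

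The main obstacle, and the only genuinely delicate point, is the verification in part (ii) that the third functional equation and the weight equation are not independent constraints but both collapse to $b=a+c$; this requires the hypothesis that the field has at least $4$ elements so that the excluded values $\lambda=0$, $\lambda=1/(2c)$, $\lambda=1/(2b)$ (when $b,c\neq 0$) do not exhaust $F^*$, guaranteeing the family is genuinely nonempty — otherwise the "if and only if" in (ii) could fail on small fields. I would also need to handle the borderline sub-cases where one of $b$, $c$ vanishes (recall only $a\neq 0$ is assumed) to confirm the support conditions are still correctly captured by the three inequalities as written; these are quick case checks rather than a real difficulty. The rest is routine linear algebra over $F$.
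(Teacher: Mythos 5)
Your proposal is correct and follows essentially the same route as the paper, which states the lemma without proof as an easy consequence of the characterizations (\ref{id0}) and (\ref{id1}): you specialize those two linear systems to the subspace spanned by $e_i,e_j,e_k$, identify the kernel of $A_I$ as the line through $(c,-b,a)$ for the weight-$1$ case, and parametrize by $\lambda=\gamma$ in the weight-$0$ case, with the field-size hypothesis correctly invoked to guarantee nonemptiness of the family in (ii). The only blemish is the confused parenthetical about ``$\beta\neq\frac{1}{2a}$'' near the end of (ii), which is unnecessary since the three exclusions $\lambda\neq 0$, $2c\lambda\neq 1$, $2b\lambda\neq 1$ already correspond exactly to $\gamma\neq 0$, $\alpha\neq 0$, $\beta\neq 0$ with no degenerate sub-cases to check.
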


We will now describe idempotent elements  with exactly  4 elements in your support. Let $I=\{ i,j,k,t\}$ where $1\leq i<j<k<t\leq n$ and 
$$A_I = \left[\begin{array}{cccc}0&a_1&b_1&c_1\\-a_1&0&a_2&b_2\\
-b_1&-a_2&0&a_3\\-c_1&-b_2&-a_3&0\end{array}\right]\ne 0.$$
By rearranging   the basis we can assume that $a_1\ne 0$. Take $e= \lambda_1 e_i + \lambda_2 e_j + \lambda_3 e_k +\lambda_4 e_t\in \A$ with $\lambda_i\in F^*$. Using   relations (\ref{id0}) and (\ref{id1}) we can easily prove   that $e\in \text{Id}(\A)$ if and only if $X=(\lambda_1, \lambda_2,\lambda_3,\lambda_4)$ is a solution of one of the following two linear systems:
$$M X^t = (0,\sfrac{1}{2},0,\delta_4,\delta_3)^t ,\qquad\text{and}\qquad  
M X^t = (1,0,a_1,0,0)^t,$$
where 
$$M= \left[\begin{array}{cccc}
1&1&1&1\\
0&a_1&b_1&c_1\\
0&0&\delta_4&\delta_3\\
0&0&0&2\Delta\\
0&0&-2\Delta&0\end{array}\right],$$
 $\delta_4=a_1+a_2-b_1$,\, $\delta_3=a_1+b_2-c_1$ and 
$\Delta= a_1a_3+a_2c_1-b_1b_2$ is the pfaffian of $A_I$. Solving above linear systems, we get the following results.
\begin{enumerate}
\item If $\Delta \ne 0$, then $\A$ has at most one idempotent element with support $I$.  This idempotent element exists if and only if $\delta_l\ne 0$ for $l=1,2,3,4$ where $\delta_1= a_2+a_3-b_2$ and $\delta_2=a_3+b_1-c_1$. In this case, the idempotent element has weight 0 and coordinates
$$\lambda_l= (-1)^l\frac{\delta_l}{2\Delta}, \quad (l=1,2,3,4).
$$

\item If $\Delta = 0$ and  $\delta_4\ne 0$, then every idempotent element with support $I$  has weight 1, and the coordinates are as follows
$$\lambda_3= \frac{a_1-\delta_3\lambda_4}{\delta_4},\quad
\lambda_2 = -\frac{1}{a_1}\Big(b_1\lambda_3+c_1\lambda_4\Big),\quad
\lambda_1=1-\lambda_2-\lambda_3-\lambda_4,\, \text{ for some }\lambda_4\in F^*.
$$
\item If $\Delta = \delta_4=0$ and  $\delta_3\ne 0$, then every idempotent element with support $I$  has weight 1, and the coordinates are as follows
$$\lambda_4= \frac{a_1}{\delta_3},\quad
\lambda_2 = -\frac{1}{a_1}\Big(b_1\lambda_3+c_1\lambda_4\Big),\quad
\lambda_1=1-\lambda_2-\lambda_3-\lambda_4,\, \text{ for some }\lambda_3\in F^*.$$
\item If $\Delta = \delta_4=\delta_3=0$, then every idempotent element with support $I$  has weight 0, and the coordinates are as follows
$$
\lambda_2 = \frac{1}{a_1}\Big(\frac{1}{2}-b_1\lambda_3-c_1\lambda_4\Big),\quad
\lambda_1=-\lambda_2-\lambda_3-\lambda_4,\, \text{ for some }\lambda_3,\lambda_4\in F^*.$$
\end{enumerate}
\begin{Que} \rm If $\A$ has one idempotent element with support $I$ and weight 1, then every idempotent element of $\A$ with support $I$ has weight 1?
\end{Que}

Here, we confirm this question in a positive way, when  $I$ has cardinality at most 4.
 
Let  $A$ be an $n\times n$ antisymmetric matrix. Then $A_{ij}$ denotes  the square submatrix of $A$ formed by deleting the $i$-th row and $j$-th column, $A_i$ denotes  the submatrix  $A_{ii}$,  $A_{\hat{\imath},\hat{\jmath}}$ is the submatrix  of $A$ obtained when  $i$-th and $j$-th  rows and columns are removed, and  pf$(A)$ is the pfaffian of $A$. We have that $\det(A)= \pf(A)^2$ and the pfaffian can  be computed
recursively as $ \pf(A) = \sum_{j=1}^n(-1)^{i+j+\theta(j-i)} a_{ij} \pf(A_{\hat{\imath}\hat{\jmath}})$ where index $i$ can be selected arbitrarily. Provably, the following two lemmas are well known. Here, we will give an elementary proof. 

\begin{Le}\label{l3} If $n$ is even, $n>2$ and $i\ne j$, then  $\det(A_{ij})=(-1)^{\theta(j-i)}\pf(A)\pf(A_{\hat{\imath},\hat{\jmath}})$, where $\theta$ is the Heaviside step function.
\end{Le}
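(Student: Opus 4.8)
The plan is to construct an antisymmetric matrix $B$ playing the role of a Pfaffian adjugate of $A$ and to show directly that $AB = \pf(A)\,I_n$; the lemma then follows by comparing this with Cramer's identity $A\,\adj(A) = \det(A)\,I_n$ together with the equality $\det(A) = \pf(A)^2$ recalled above. Concretely, I would put $B = (b_{pq})$ with $b_{pp} = 0$ and $b_{pq} = (-1)^{p+q+\theta(p-q)}\pf(A_{\hat p,\hat q})$ for $p \ne q$. Since $A_{\hat p,\hat q}$ is unchanged when $p$ and $q$ are interchanged, while the exponent $p+q+\theta(p-q)$ changes by $\pm 1$, one gets $b_{qp} = -b_{pq}$, so $B$ is antisymmetric of the same (even) order $n$ as $A$.

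First I would compute the entries of $AB$. On the diagonal, $(AB)_{ii} = \sum_j a_{ij}b_{ji} = \sum_j (-1)^{i+j+\theta(j-i)}a_{ij}\pf(A_{\hat{\imath},\hat{\jmath}})$, which is precisely the recursive row expansion of $\pf(A)$ recalled just before the statement, so $(AB)_{ii} = \pf(A)$. For an off-diagonal entry I fix $i \ne k$ and introduce the matrix $\hat A$ obtained from $A$ by replacing its $k$-th row by the $i$-th row of $A$, its $k$-th column by the $i$-th column of $A$, and its $(k,k)$-entry by $0$. A routine check shows $\hat A$ is again antisymmetric of order $n$; since $\hat A$ agrees with $A$ away from row and column $k$, we have $\hat A_{\hat k,\hat{\jmath}} = A_{\hat k,\hat{\jmath}}$ and $\hat A_{kj} = a_{ij}$ for every $j \ne k$. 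Expanding $\pf(\hat A)$ along its $k$-th row (the $j = k$ term drops because $\hat A_{kk} = 0$) gives $\pf(\hat A) = \sum_j (-1)^{k+j+\theta(j-k)}a_{ij}\pf(A_{\hat{\jmath},\hat k}) = (AB)_{ik}$. On the other hand, rows $i$ and $k$ of $\hat A$ both equal the $i$-th row of $A$ with its $k$-th coordinate set to $0$, so $\det(\hat A) = 0$; since $\det(\hat A) = \pf(\hat A)^2$ and $F$ is a field, $\pf(\hat A) = 0$. Hence $AB = \pf(A)\,I_n$, and, transposing (using $A^t = -A$ and $B^t = -B$), also $BA = \pf(A)\,I_n$.

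It remains to read off the formula, and for this it suffices to treat the case $\pf(A) \ne 0$: the claimed equality is a polynomial identity in the entries of $A$, so it may be verified for the generic antisymmetric matrix $X$ with entries in the field $\mathbb{Q}(x_{pq}\colon p<q)$ of rational functions, which is invertible since $\det(X) = \pf(X)^2 \ne 0$; alternatively, when $\pf(A) = 0$ one has $\det(A) = 0$, and because an antisymmetric matrix has even rank, $\rank(A) \le n-2$, whence every $(n-1)$-minor of $A$ vanishes and both sides of the identity are $0$. When $\pf(A) \ne 0$, from $AB = \pf(A)\,I_n$ we obtain $A^{-1} = \pf(A)^{-1}B$; equating its $(q,p)$-entry with the Cramer value $(-1)^{p+q}\det(A_{pq})/\det(A)$ and clearing the factor $\det(A) = \pf(A)^2$ yields $\det(A_{pq}) = (-1)^{\theta(q-p)}\pf(A)\,\pf(A_{\hat p,\hat q})$, which is the assertion. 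I expect the main obstacle to be the bookkeeping in the off-diagonal step: one has to confirm carefully that $\hat A$ is genuinely antisymmetric, that its $k$-th-row Pfaffian expansion reproduces $(AB)_{ik}$ term by term, and that rows $i$ and $k$ of $\hat A$ really coincide, keeping track of the Heaviside signs throughout; everything else is a quoted identity or routine matrix algebra.
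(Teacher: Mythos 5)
Your proposal is correct and follows essentially the same route as the paper: you build the same Pfaffian adjugate matrix (the paper's $S$, your $B$), verify $AB=\pf(A)I_n$ by the recursive Pfaffian expansion on the diagonal and by the vanishing of the Pfaffian of a matrix with two equal rows off the diagonal (your $\hat A$ is exactly the paper's $Q$), and then compare with $A^{-1}=\det(A)^{-1}\adj(A)$. The only cosmetic difference is that the paper works with a generic antisymmetric matrix $T$ from the outset, so $\pf(T)\ne 0$ automatically, whereas you treat the degenerate case $\pf(A)=0$ separately (by specialization or by the rank argument) --- both handlings are fine.
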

\begin{proof} Let $t_{ij}, \, (1 \leq i< j \leq n)$,  be $n(n-1)/2$  algebraically independent elements over $F$, let $t_{ii}=0$ and $t_{ij}=-t_{ji}$ for $i>j$. Then the matrix $T=(t_{ij})$ is antisymmetric. Define a new antisymmetric matrix $S=(s_{ij})$ by 
	$$s_{ij}= (-1)^{i+j+\theta(i-j)}  \pf(T_{\hat{\imath},\hat{\jmath}})$$
	for $i\ne j$ and $s_{ii}=0$. The $(i,j)$-entry of the product $TS$ is equal to ($\delta_{ij}$ is  Kronecker delta)
	$$\sum_{k=1}^n t_{ik}s_{kj} = \sum_{ \substack{k=1 \\ k \neq j}}^n  (-1)^{k+j+\theta(k-j)}t_{ik} \, \pf(T_{\hat{\jmath},\hat{k}})= \delta_{ij} \pf(T)
	$$ 
since for $i=j$ we have  the recursive formula for  the pfaffian of $T$, and for $i\ne j$ the above  sum  is equal  to the pfaffian of a matrix $Q$ obtained from $T$ by  first replacing the $j$-th row by the $i$-th row, next  replacing the $j$-th column by the $i$-th column; since $Q$ has two equal rows,  $\pf(Q)=0$.  Thus, 
	$\frac{1}{\pf(T)}S$ is the inverse matrix of $T$. Because  $T^{-1}=\frac{1}{\det(T)} \adj (T)$,  we have the following relations
	$$\frac{1}{\pf(T)^2}(-1)^{i+j} \det(T_{ij})= \frac{1}{\pf(T)}s_{ji} = \frac{1}{\pf(T)}
	(-1)^{i+j+\theta(j-i)}  \pf(T_{\hat{\imath},\hat{\jmath}})$$
	 for all $i,j\in \{1,2,\ldots, n\}$. Consequently, 
	$$\det(T_{ij})= (-1)^{\theta(j-i)} \pf(T) \cdot \pf(T_{\hat{\imath},\hat{\jmath}}).$$
	This completes the proof of the lemma. 
\end{proof}

\begin{Le} If $n$ is odd, then $\det(A_{ij})=\pf(A_i)\pf(A_j)$ for all $i,j\in \{1,2,\ldots,n\}$.
\end{Le}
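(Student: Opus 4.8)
The plan is to follow the pattern of Lemma~\ref{l3}: reduce to a generic matrix, border it to obtain an \emph{even}-order antisymmetric matrix, and then exploit the identity $\det=\pf^2$. First I would let $T=(t_{ij})$ be the generic $n\times n$ antisymmetric matrix whose entries $t_{ij}$ with $i<j$ are algebraically independent over $F$ (with $t_{ii}=0$, $t_{ij}=-t_{ji}$ for $i>j$). Since $\det(T_{ij})$, $\pf(T_i)$ and $\pf(T_j)$ are then polynomials in the $t_{ij}$, it suffices to establish the polynomial identity $\det(T_{ij})=\pf(T_i)\pf(T_j)$, after which the statement for an arbitrary antisymmetric $A$ over $F$ follows by the substitution $t_{ij}\mapsto a_{ij}$. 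The case $i=j$ is immediate, being just $\det(T_{ii})=\pf(T_{ii})^2$ for the even-order antisymmetric matrix $T_i$; so from now on assume $i\neq j$.

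Next I would introduce fresh indeterminates $r_1,\dots,r_n$, write $r=(r_1,\dots,r_n)^t$, and pass to the $(n+1)\times(n+1)$ antisymmetric matrix
\[
\widehat T=\left[\begin{array}{cc}0&r^t\\ -r&T\end{array}\right],
\]
whose order $n+1$ is even. Computing $\pf(\widehat T)$ by the recursive pfaffian formula along the new (first) row, whose only nonzero entries are the $r_j$ and for which the relevant pair of row/column deletions leaves exactly $T_j$, should give $\pf(\widehat T)=\sum_{j=1}^n(-1)^{j+1}r_j\,\pf(T_j)$. Computing $\det(\widehat T)$ instead by a Laplace expansion along the first row followed by a Laplace expansion of each resulting minor along the column coming from $-r$ should give $\det(\widehat T)=\sum_{i,j=1}^n(-1)^{i+j}r_i r_j\,\det(T_{ij})$ (this is also the bordered-determinant identity $\det(\widehat T)=r^t\,\adj(T)\,r$, which ties the computation to the style of the previous lemma).

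The conclusion then comes from equating these via $\det(\widehat T)=\pf(\widehat T)^2$ and comparing, for $i\neq j$, the coefficient of $r_i r_j$ in the polynomial ring in the $r_k$: the pfaffian side contributes $2(-1)^{i+j}\pf(T_i)\pf(T_j)$, while the determinant side contributes $(-1)^{i+j}\bigl(\det(T_{ij})+\det(T_{ji})\bigr)$. Since transposing $T_{ij}$ turns it into $-T_{ji}$ and $n-1$ is even, one has $\det(T_{ij})=\det(T_{ji})$, so the two contributions force $\det(T_{ij})=\pf(T_i)\pf(T_j)$ after cancelling the factor $2$ (legitimate since $\mathrm{char}\,F\neq 2$, or by torsion-freeness of $\Z[t_{ij}]$ if one wants characteristic $2$ as well). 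A final specialization of the $t_{ij}$ yields the lemma. The main obstacle is purely the sign bookkeeping — checking that the Heaviside-shifted signs $(-1)^{i+j+\theta(\cdot)}$ in the recursive pfaffian expansion of $\widehat T$ and the cofactor signs in the double Laplace expansion of $\det(\widehat T)$ combine into exactly the two formulas above; once those expansions are pinned down, $\det=\pf^2$ and the symmetry $\det(T_{ij})=\det(T_{ji})$ do the rest.
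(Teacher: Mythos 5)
Your argument is correct, but it takes a genuinely different route from the paper. The paper reduces to $i=1$, $j=2$ by a permutation, writes $A$ in block form with an $(n-2)\times(n-2)$ antisymmetric block $B$, brings $B$ to the congruence canonical form $\diag\{S_1,\dots,S_m,\mathbf 0\}$, and then computes $\det(A_{12})$, $\pf(A_1)$ and $\pf(A_2)$ explicitly in terms of the transformed border vectors, reading off the identity from the three closed-form expressions. You instead stay entirely within the generic-matrix framework of Lemma~\ref{l3}: bordering $T$ by indeterminates $r_1,\dots,r_n$ produces an even-order antisymmetric $\widehat T$, and the single identity $\det(\widehat T)=\pf(\widehat T)^2$, combined with $\det(\widehat T)=r^t\adj(T)\,r=\sum_{i,j}(-1)^{i+j}r_ir_j\det(T_{ij})$ and $\pf(\widehat T)=\sum_j(-1)^{j+1}r_j\pf(T_j)$, yields all the relations $\det(T_{ij})=\pf(T_i)\pf(T_j)$ at once by comparing coefficients of $r_ir_j$ (the overall signs in the pfaffian expansion are even immaterial, since they disappear upon squaring). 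The needed symmetry $\det(T_{ij})=\det(T_{ji})$ follows from $(T_{ij})^t=-T_{ji}$ and $n-1$ even, and the cancellation of the factor $2$ is legitimate in $\Z[t_{ij},r_k]$, so your version in fact works in every characteristic, whereas it avoids the canonical-form step and the case analysis on the rank of $B$ that the paper's computation implicitly carries. The paper's proof buys explicit formulas for the three quantities involved; yours buys uniformity (all pairs $(i,j)$ and the diagonal case simultaneously) and a closer structural parallel with the even-order Lemma~\ref{l3}. I see no gap.
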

\begin{proof} The result is obvious when $i=j$ so assume $i\ne j$. We can assume without loss of generality that  $i=1$ and $j=2$ since  for an arbitrary antisymmetric matrix, simultaneous interchange of two different rows and corresponding columns changes the sign of the pfaffian. Thus, the matrix $A$ can be written as  
$$A=\left(\begin{array}{cc|c}
0 & a_{12}& u\\
-a_{12} & 0 & v \\ \hline
-u^t & - v^t & B
\end{array}\right)$$
where $B$ is an $(n-2)\times (n-2)$ antisymmetric matrix,  $u=(a_{13}, \cdots, a_{1n})$ and  $v=(a_{23}, \cdots, a_{2n})$. There exists a non-singular matrix $Q$ such that $Q^tBQ =N=\diag\{S_1,\cdots,S_{m},{\bf 0}\}$ where $m=(n-3)/2$, $S_k= \left(\begin{array}{cc}
0 & a_k\\
-a_k & 0 
\end{array}\right)\in M_2(F)$ and ${\bf 0}=(0)\in M_1(F)$. Let $R=\diag\{1,Q\}$ and $d=\det R= \det Q\ne 0$. Then 
\begin{eqnarray*}
\det(A_{12}) &=& \frac{1}{d^2} \det(R^t A_{12}R )=  \frac{1}{d^2}  \det \Big( R^t\left(\begin{array}{c|c}
-a_{12}  & v \\ \hline
-u^t  & B
\end{array}\right)R \Big)\\
&=&
\frac{1}{d^2} \det \left(\begin{array}{c|c}
-a_{12}  & vQ \\ \hline
-Q^tu^t  & Q^tBQ
\end{array}\right)= \frac{1}{d^2} bc \Pi_{k=1}^m a_k^2
\end{eqnarray*}
 where 
$b= \sum_{k=3}^n a_{1k}q_{k-2,n-2}$ and $c= \sum_{k=3}^n a_{2k}q_{k-2,n-2}$. On the other hand, 
\begin{eqnarray*}
\pf(A_1) &=& \frac{1}{d} \pf(R^tA_1R)=
 \frac{1}{d} \pf\Big(R^t \left(\begin{array}{c|c}
0  & v \\ \hline
-v^t  & B
\end{array}\right)R\Big)\\
&=&\frac{1}{d} \pf \left(\begin{array}{c|c}
0  & vQ \\ \hline
-Q^tv^t  & Q^tBQ
\end{array}\right)=\frac{1}{d} c (\Pi_{k=1}^m a_k)
\end{eqnarray*}
by recursive formula of the Phaffian. Analogously we can see that
$$\pf(A_2) = pf \left(\begin{array}{c|c}
0  & u \\ \hline
-u^t  & B
\end{array}\right)=\frac{1}{d} \pf \left(\begin{array}{c|c}
0  & uQ \\ \hline
-Q^tu^t  & Q^tBQ
\end{array}\right)=\frac{1}{d} b (\Pi_{k=1}^m a_k).$$
 This completes the proof of the lemma.
\end{proof}

\begin{teo}\label{t1} Let $\A$ be an $n$-dimensional Lotka-Volterra algebra associated to an antisymmetric matrix $A=(a_{ij})$ with respect to the basis $\Phi=\{e_1,\ldots, e_n\}$. Take $I=\{1,2,\ldots,n\}$.
\begin{enumerate}[(i)]
\item If $n$ is even and $\det(A)\ne 0$, then $\A$ has at most  one idempotent element with support $I$; in this case, the idempotent element has weight  0, and its coordinates are as follows
\begin{equation}\label{t12-0}
	\displaystyle \lambda_j= \frac{1}{2\pf(A)}\sum_{\substack{i=1 \\ i \neq j}}^n (-1)^{i+j+\theta(j-i)} \pf(A_{\hat{\imath},\hat{\jmath}}), \qquad (j=1,2,\ldots, n).
\end{equation}
\item If $n$ is odd, $\sum_{i=1}^n (-1)^i \pf(A_{i}) \ne 0$  and $ \pf(A_{j})\ne 0$ for $j=1,2,\ldots, n$, then  $\A$ has exactly  one idempotent element with support $I$; this idempotent element  has weight  1, and its coordinates are as follow
\begin{equation}\label{t12}
\lambda_j= \frac{(-1)^j \pf(A_{j})}{\sum_{i=1}^n (-1)^i \pf(A_{i})}, \qquad (j=1,2,\ldots, n).
\end{equation}
\end{enumerate}
\end{teo}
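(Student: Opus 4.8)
The plan is to reinterpret conditions (\ref{id0}) and (\ref{id1}) as linear systems over $F$. Write $x=\sum_{j}\lambda_j e_j$, put $\lambda=(\lambda_1,\dots,\lambda_n)^t$ and $\mathbf 1=(1,\dots,1)^t$; then $\omega(x)=\mathbf 1^{t}\lambda$ and $\omega_i(x)=(A\lambda)_i$. Since $\supp(x)=I$ means every $\lambda_j\ne 0$, the vector $x$ is an idempotent with support $I$ and weight $0$ exactly when $A\lambda=\tfrac12\mathbf 1$, $\mathbf 1^{t}\lambda=0$ and all $\lambda_j\ne0$, and it is an idempotent with support $I$ and weight $1$ exactly when $A\lambda=0$, $\mathbf 1^{t}\lambda=1$ and all $\lambda_j\ne0$. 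I will repeatedly use three elementary facts: $A^{t}=-A$; over any field $\im A=(\ker A^{t})^{\perp}$ for the standard bilinear form; and $v^{t}Bv=0$ for every antisymmetric $B$ and every $v$, because $\operatorname{char}F\ne2$.

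For part (i), $n$ even and $\det A\ne0$ make $A$ invertible, so $A\lambda=0$ forces $\lambda=0$ and there is no weight-$1$ idempotent with support $I$. For weight $0$ the system $A\lambda=\tfrac12\mathbf 1$ has the single solution $\lambda=\tfrac12A^{-1}\mathbf 1$, and the remaining condition $\mathbf 1^{t}\lambda=\tfrac12\mathbf 1^{t}A^{-1}\mathbf 1=0$ is automatic since $A^{-1}$ is again antisymmetric; this already proves ``at most one''. To obtain the displayed coordinates I expand through the adjugate: $\lambda_j=\tfrac12\sum_{i}(A^{-1})_{ji}=\tfrac1{2\det A}\sum_{i}(-1)^{i+j}\det(A_{ij})$. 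The term $i=j$ vanishes because $A_{jj}$ is an odd-sized antisymmetric matrix; for $i\ne j$ Lemma \ref{l3} gives $\det(A_{ij})=(-1)^{\theta(j-i)}\pf(A)\pf(A_{\hat\imath,\hat\jmath})$; and $\pf(A)/\det A=1/\pf(A)$. Collecting terms yields exactly (\ref{t12-0}).

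For part (ii), $n$ odd forces $\det A=0$, while $\pf(A_1)\ne0$ exhibits the nonzero minor $\det A_{11}=\pf(A_1)^2$, so $\rank A=n-1$ and $\ker A$ is one-dimensional. The key computation is $\adj A$: by the lemma for odd $n$, $(\adj A)_{ij}=(-1)^{i+j}\det(A_{ji})=(-1)^{i+j}\pf(A_i)\pf(A_j)=w_iw_j$, where $w_j:=(-1)^{j}\pf(A_j)$; hence $\adj A=ww^{t}$. From $A\cdot\adj A=\det(A)I=0$ and $w\ne0$ we get $Aw=0$, so $\ker A=\operatorname{span}(w)$. Therefore the weight-$1$ idempotents with support $I$ are precisely the vectors $\lambda=cw$ with $c\,\mathbf 1^{t}w=1$; since $\mathbf 1^{t}w=\sum_i(-1)^{i}\pf(A_i)\ne0$ and every $w_j\ne0$ by hypothesis, there is exactly one, with $\lambda_j=(-1)^{j}\pf(A_j)/\sum_i(-1)^{i}\pf(A_i)$, i.e. (\ref{t12}). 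Finally, a weight-$0$ idempotent with support $I$ would require $\tfrac12\mathbf 1\in\im A=(\ker A)^{\perp}=w^{\perp}$, i.e. $\mathbf 1^{t}w=0$, contradicting the hypothesis; so that system is inconsistent and the idempotent just found is the only idempotent with support $I$.

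The determinant lemmas carry all the real content; beyond them the argument is linear algebra. The only place I expect to have to be careful is the sign bookkeeping — the adjugate-versus-cofactor index convention and the Heaviside factor $(-1)^{i+j+\theta(j-i)}$ — together with the observation that $\det A_{jj}=0$ for odd $n-1$, which is what makes the diagonal cofactor drop out of the sum in part (i).
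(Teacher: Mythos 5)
Your proof is correct. Part (i) follows essentially the paper's own route: invert $A$, expand through the adjugate, drop the $i=j$ term because $\det(A_{jj})$ is the determinant of an odd-sized antisymmetric matrix, and convert cofactors to pfaffians via Lemma \ref{l3}; your additional remark that $\mathbf{1}^{t}A^{-1}\mathbf{1}=0$ holds automatically (since $A^{-1}$ is again antisymmetric and $\operatorname{char}F\neq 2$) is a small sharpening absent from the paper, showing that the weight constraint is never the obstruction to existence, only the nonvanishing of the coordinates. Part (ii) takes a genuinely different route. The paper works with the augmented $(n+1)\times n$ coefficient matrix $B$ obtained by stacking the row $\mathbf{1}^{t}$ on top of $A$: it computes $\det(B_{(i)})=-\pf(A_i)\,d$ by Laplace expansion to conclude $\rank(B)=n$ (hence at most one solution of each system), verifies the candidate $\lambda_j=(-1)^{j}\pf(A_j)/d$ by substituting into $A\lambda$ and recognizing the Laplace expansion of the vanishing $\det(A)$, and excludes weight-$0$ idempotents by arguing that $\mathbf{1}$ lies outside the row space, hence (by antisymmetry) outside the column space of $A$. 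You instead use the odd-$n$ determinant lemma to factor $\adj(A)=ww^{t}$ with $w_j=(-1)^{j}\pf(A_j)$, deduce $Aw=0$ from $A\cdot\adj(A)=\det(A)I=0$, pin down $\ker A=Fw$ from the nonzero minor $\det(A_{11})=\pf(A_1)^{2}$, and kill the weight-$0$ system by the orthogonality $\im A=(\ker A^{t})^{\perp}=w^{\perp}$ together with $\mathbf{1}^{t}w=d\neq 0$. Both arguments rest on the same two determinant lemmas and the same underlying identity (the Laplace expansion of the singular $\det A$), but yours repackages the computation as structural statements about $\ker A$ and $\im A$, which turns the exclusion of weight-$0$ idempotents into a one-line orthogonality check and makes the uniqueness transparent, whereas the paper's rank computation on $B$ keeps everything at the level of explicit minors.
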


\begin{proof}  If  there exists  $e=\sum_{i=1}^n \lambda_i e_i\in \Id(\A)$ with $ \lambda_i\in F^*$ for $i=1,2\ldots,n$, then relations  (\ref{id0}) and (\ref{id1}) imply  that $(\lambda_1,\ldots,\lambda_n)$ is a solution of one of the following two  matrix equations:

\begin{equation}\label{idm}
\left(\begin{array}{ccc}
1&\cdots&1   \\ \hline
 &      & \\
 &   A  & \\
  &      & 
\end{array}\right) 
\left(\begin{array}{c}
x_1   \\ 
 \vdots\\
x_n
\end{array}\right) = 
\left(\begin{array}{c}
0   \\  \hline
1/2\\
 \vdots\\
1/2
\end{array}\right)
\text{  or }
\left(\begin{array}{ccc}
1&\cdots&1   \\ \hline
 &      & \\
 &   A  & \\
  &      & 
\end{array}\right) 
\left(\begin{array}{c}
x_1   \\ 
 \vdots\\
x_n
\end{array}\right) = 
\left(\begin{array}{c}
1   \\  \hline
0\\
 \vdots\\
0
\end{array}\right).
\end{equation}

Firstly, we will assume that  $\det(A)\ne 0$. Then $n$ is even,  second system of (\ref{idm})  has no solutions and using Cramer's rule,  first matrix equation of (\ref{idm})  has a unique solution  given by 
\begin{eqnarray*}x_j &=& \frac{1}{\det(A)} \sum_{i=1}^n (-1)^{i+j} \frac{1}{2}\det(A_{ij})= \frac{1}{2\pf(A)^2} \sum_{\substack{i=1 \\ i \neq j}}^n (-1)^{i+j} \det(A_{ij})\\
&=&\frac{1}{2\pf(A)^2} \sum_{\substack{i=1 \\ i \neq j}}^n (-1)^{i+j}(-1)^{\theta(j-i)}\pf(A) \pf(A_{\hat{\imath},\hat{\jmath}}) \quad \text{by Lemma \ref{l3}}\\
&=&\frac{1}{2\pf(A)} \sum_{\substack{i=1 \\ i \neq j}}^n (-1)^{i+j+\theta(j-i)} \pf(A_{\hat{\imath},\hat{\jmath}}) \quad \text{for } j=1,2,\ldots,n.
\end{eqnarray*}

Assume now that  $n$ is odd, $d := \sum_{i=1}^n (-1)^i \pf(A_{i}) \ne 0$  and $ \pf(A_{j})\ne 0$ for $j=1,2,\ldots, n$. 
Becuase $n$ is odd, we have that  $\det(A)=0$. Take  $B= \tiny \left(\begin{array}{c}
\bf{1}   \\ \hline
 A
\end{array}\right)  \in M_{n+1,n}(F)$,   the coefficient matrix of every matrix system given in  (\ref{idm}), and let $B_{(i)}$ be the submatrix of $B$  obtained by deleting $(i+1)$-th row. Using Laplace expansion, we get  that 
$$\det(B_{(i)})= \sum_{j=1}^n (-1)^{i+1}(-1)^{i+j} \det(A_{ij})=
\sum_{j=1}^n (-1)^{j+1} \pf(A_{i})\pf(A_{j})= -
\pf(A_{i})\cdot d$$
for $i=1,2,\ldots,n$. Thus, matrix $B$ has rank  equal to $n$ and hence  each matrix equation of (\ref{idm}) has at most one solution. Furthermore, if $\lambda_j= (-1)^j\pf(A_j)/d$, then obviously $\sum_{j=1}^n \lambda_j=1$ and 
\begin{eqnarray*}
(-1)^i \pf(A_i)\sum_{j=1}^n a_{ij} \lambda_j&=& 
\frac{1}{d}\sum_{j=1}^n (-1)^{i+j}a_{ij}\pf(A_i)\pf(A_j)\\
& =& \frac{1}{d}\sum_{j=1}^n (-1)^{i+j}a_{ij}\det(A_{ij})= \frac{1}{d} \det(A)= \frac{1}{d} \cdot 0=0
\end{eqnarray*}
 so that $(\lambda_1, \ldots, \lambda_n)$
 is the unique solution of the second matrix equation of (\ref{idm}). On the other hand, because $\det(A)=0$ and $\rank(B)=n$, we can affirm that the vector $(1,\ldots,1)$ is not a linear combination of the rows of $A$. Using now  that $A$ is antisymmetric, we can affirm  that  $(1,\ldots,1)$, and hence also $(\sfrac{1}{2},\ldots,\sfrac{1}{2})$, is not a linear combination  of the columns of $A$. Therefore, first matrix equation of (\ref{idm}) has no solutions. This completes the proof of the theorem.
\end{proof}
Note that (ii) of above theorem can be generalized in the following way.

\begin{Le} Let $\A$ be as in previous theorem. If $n$ is odd and $d:=\sum_{i=1}^n (-1)^i \pf(A_{i}) \ne 0$, then  $\A$ has at most  one idempotent element with support $I$; in this case, this idempotent element  has weight  1, and its coordinates are given by (\ref{t12}).
\end{Le}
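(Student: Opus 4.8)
The plan is to repeat the proof of Theorem~\ref{t1}(ii) almost word for word; the hypothesis ``$\pf(A_j)\neq 0$ for all $j$'' was used there only to justify a single division, and I would get around it with the classical identity $A\cdot\adj(A)=\det(A)\,I$.

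First I reduce, exactly as in the proof of Theorem~\ref{t1}, to the two matrix equations of~(\ref{idm}) with common coefficient matrix $B\in M_{n+1,n}(F)$: if $e=\sum_i\lambda_i e_i\in\Id(\A)$ has support $I$, then all $\lambda_i\neq 0$ and, by~(\ref{id0})--(\ref{id1}), the vector $(\lambda_1,\dots,\lambda_n)$ solves one of those two equations. Since $n$ is odd, $\det A=0$. The computation already carried out in the proof of Theorem~\ref{t1} gives $\det(B_{(i)})=-\pf(A_i)\,d$ for every $i$ (with $B_{(i)}$ as defined there); because $d\neq 0$ there is an index $j_0$ with $\pf(A_{j_0})\neq 0$, so $\det(B_{(j_0)})\neq 0$ and therefore $\rank B=n$, which forces each of the two equations in~(\ref{idm}) to have at most one solution. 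Moreover, as in that proof, $\det A=0$ and $\rank B=n$ imply that $(1,\dots,1)$ is not a combination of the rows, hence (by antisymmetry of $A$) not of the columns, of $A$; thus neither is $(\sfrac{1}{2},\dots,\sfrac{1}{2})$, and the first equation of~(\ref{idm}) has no solution.

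The crux is to prove that the vector $v:=\bigl((-1)^i\pf(A_i)\bigr)_{i=1}^n$, which is nonzero since $\sum_i v_i=d\neq 0$, lies in $\ker A$ --- \emph{without} assuming all $\pf(A_i)$ are nonzero. Here I would combine $A\cdot\adj(A)=\det(A)\,I_n=0$ with the formula $(\adj A)_{ij}=(-1)^{i+j}\det(A_{ji})$ and the identity $\det(A_{ji})=\pf(A_i)\pf(A_j)$ for antisymmetric matrices of odd order (the lemma stated just before Theorem~\ref{t1}). Together these give $(\adj A)_{ij}=(-1)^j\pf(A_j)\,v_i$, so the $j$-th column of $\adj(A)$ equals $(-1)^j\pf(A_j)\,v$; since every column of $\adj(A)$ lies in $\ker A$ (as $A\,\adj A=0$), taking $j=j_0$ yields $Av=0$.

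To finish: $A(v/d)=0$ and $(1,\dots,1)\cdot(v/d)=\frac{1}{d}\sum_i(-1)^i\pf(A_i)=1$, so $v/d$ solves the second equation of~(\ref{idm}), and by the uniqueness above it is its only solution. Hence any idempotent of $\A$ with support $I$ must equal $\sum_i\frac{(-1)^i\pf(A_i)}{d}\,e_i$; in particular there is at most one, its weight is $\sum_i\lambda_i=1$, and its coordinates are exactly~(\ref{t12}) (and it genuinely has support $I$ precisely when every $\pf(A_i)\neq 0$, which is the situation of Theorem~\ref{t1}(ii)). The only real obstacle is the step in the previous paragraph: in Theorem~\ref{t1}(ii) one divides $(-1)^i\pf(A_i)\sum_j a_{ij}\lambda_j=0$ by $\pf(A_i)$, which is no longer available, and the point is to notice that $A\,\adj(A)=0$ produces the kernel vector $v$ directly --- equivalently, that the sub-Pfaffian vector always spans the kernel of an odd antisymmetric matrix. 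Everything else is linear algebra already done in the proof of Theorem~\ref{t1}.
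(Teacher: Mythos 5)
Your proof is correct, and it departs from the paper's precisely at the step you identify as the crux: showing that $v=\bigl((-1)^i\pf(A_i)\bigr)_{i=1}^n$ lies in $\ker A$ when some of the sub-Pfaffians may vanish. The paper handles this with a generic-matrix argument: it takes an antisymmetric matrix $T=(t_{ij})$ with algebraically independent entries, observes that each $\pf(T_i)$ is a nonzero polynomial, divides the identity $\pf(T_i)\bigl(\sum_j(-1)^j t_{ij}\pf(T_j)\bigr)=0$ from the proof of Theorem \ref{t1} by $\pf(T_i)$ inside the (integral) polynomial ring to get $\sum_j(-1)^j t_{ij}\pf(T_j)=0$ as a polynomial identity, and then (implicitly) specializes to $A$. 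You instead combine $A\,\adj(A)=\det(A)I=0$ with the lemma $\det(A_{ji})=\pf(A_i)\pf(A_j)$ to see that the $j$-th column of $\adj(A)$ equals $(-1)^j\pf(A_j)\,v$, and extract $Av=0$ from any column with $\pf(A_{j_0})\ne 0$, which exists because $d\ne 0$. Both routes rest on the same preliminary lemma $\det(A_{ij})=\pf(A_i)\pf(A_j)$; yours stays entirely in finite linear algebra over $F$ and makes visible the structural fact that the sub-Pfaffian vector spans the kernel of a corank-one odd antisymmetric matrix, while the paper's is shorter but leaves the specialization step unstated. Your treatment of the remaining points --- that $\rank B=n$ needs only one nonvanishing $\pf(A_{j_0})$, that the first system of (\ref{idm}) is inconsistent, and that the unique candidate fails to have support $I$ exactly when some $\pf(A_i)=0$ --- fills in details the paper omits but clearly intends.
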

\begin{proof} Let $t_{ij} \, (1 \leq i< j \leq n)$,  be $n(n-1)/2$  algebraically independent elements over $F$, let $t_{ii}=0$ and $t_{ij}=-t_{ji}$ for $i>j$. Then  $T=(t_{ij})$ is an antisymmetric matrix. Obviously 
	$d(T):=\sum_{i=1}^n (-1)^i \pf(T_{i}) \ne 0$, and $ \pf(T_{i})$ is a nonzero polynomial of degree $(n-1)/2$ in $t_{ij} \, (1 \leq i< j \leq n)$. As in previous theorem, we have that 
	$$\pf(T_i)\Big(\sum_{j=1}^n (-1)^j t_{ij}\pf(T_j)\Big)=0$$
for $i=1,2,\ldots, n$. Because, $\pf(T_i)	\ne 0$, we have that 
$$\sum_{j=1}^n (-1)^j t_{ij}\pf(T_j)=0$$
	for $i=1,2,\ldots, n$. This completes the proof of the lemma.
\end{proof}

\begin{Ex} \rm Let $\A$ be the Lotka-Volterra algebra of dimension 3 over the real field, with natural  basis $\Phi=\{e_1,e_2,e_3\}$ and associated to the skew-symmetric matrix $A=(a_{ij})$ where $a_{12}=-a_{21}=1$ and $a_{ij}=0$ in other cases. This algebra has odd dimension, $\sum_{j=1}^3 (-1)^j \pf(A_j) = (-1)^3\pf(A_3) = -1 \ne 0$ and it does have no idempotent elements with support $\{1,2,3\}$. 
\end{Ex}

Let $A=(a_{ij})$ be an antisymmetric matrix of order $n\times n$ such that  $a_{ij}=a$ for all $i<j$. If $n$ is even, then we can check inductively that $\pf(\A) = a^{n/2}$ since the case $n=2$ is trivial and for $n>2$, using the recursive formula of the pfaffian, we have that 
$\pf(A) = \sum_{j=2}^n (-1)^j a_{1j} \pf(A_{\hat{1}\hat{j}})=  a \sum_{j=2}^n (-1)^j  \pf(A_{\hat{1}\hat{j}})= a \pf(A_{\hat{1}\hat{2}}) = a a^{(n-2)/2}= a^{n/2}$. In particular, we have that $\det(A)=a^n$. On the other hand, if $n$ is odd then 
$$\sum_{i=1}^n (-1)^i \pf(A_{i})= \sum_{i=1}^n (-1)^ia^{(n-1)/2}= -a^{(n-1)/2}.$$
The following lemma, proved in \cite{Micali}, is now a corollary of Theorem \ref{t1}.
\begin{Le} \label{le6}
	Let $\A$ be as in Theorem \ref{t1} and assume  $a_{ij} = a\ne 0$ for all $i<j$. Then for each $J=\{p_1,\ldots,p_k\}$ with $1\leq p_1 < \cdots < p_k \leq n$ we have exactly one idempotent element e  in $\A$ with support $J$. If $k$ is  even, then $e=\frac{1}{2a}\sum_{i=1}^k (-1)^i e_{p_i}\in N$ and if $k$ is odd then  $e=\sum_{i=1}^k (-1)^{i+1} e_{p_i}\in H$. In particular, $\A$ has exactly $2^n$ idempotent elements. 
\end{Le}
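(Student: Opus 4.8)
The plan is to obtain this lemma, as announced, as a direct corollary of Theorem~\ref{t1}. Fix a nonempty subset $J=\{p_1,\dots,p_k\}$ of $\{1,2,\dots,n\}$ with $p_1<p_2<\cdots<p_k$. I would first note that $\gerado\{e_{p_1},\dots,e_{p_k}\}$ is a subalgebra of $\A$, because $e_{p_s}e_{p_t}\in\gerado\{e_{p_s},e_{p_t}\}$ for all $s,t$; it is itself a $k$-dimensional Lotka--Volterra algebra with natural basis $\{e_{p_1},\dots,e_{p_k}\}$ and associated antisymmetric matrix $A_J=(a_{p_sp_t})$, and since $p_1<\cdots<p_k$ this matrix again has every entry above the diagonal equal to $a$. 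An element of $\A$ with support exactly $J$ is idempotent in $\A$ if and only if the same element is idempotent in this subalgebra with full support, so it is enough to count the full-support idempotents of the subalgebra, and for that I would invoke Theorem~\ref{t1} applied to $A_J$.

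Suppose first that $k$ is even. By the pfaffian computation carried out just before the lemma, $\det(A_J)=a^k\ne0$, so Theorem~\ref{t1}(i) tells us there is at most one idempotent with full support, necessarily of weight $0$, and if it exists its coordinates are those of (\ref{t12-0}). The only remaining work is arithmetic: substituting $\pf(A_J)=a^{k/2}$ and $\pf\big((A_J)_{\hat\imath,\hat\jmath}\big)=a^{(k-2)/2}$ (the same pfaffian formula for the $(k-2)\times(k-2)$ submatrix, which is again of the required shape) into (\ref{t12-0}) reduces $\lambda_j$ to $\tfrac1{2a}\sum_{i\ne j}(-1)^{\,i+j+\theta(j-i)}$; evaluating this alternating sum — split into $i<j$ and $i>j$, and use $\sum_{i=1}^k(-1)^i=0$ since $k$ is even — gives $\lambda_j=(-1)^j/(2a)$. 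Thus the candidate is $e=\tfrac1{2a}\sum_{i=1}^k(-1)^ie_{p_i}$, and a short direct check that it satisfies (\ref{id0}) shows it really is idempotent; hence it exists and is unique, and $\omega(e)=\tfrac1{2a}\sum_{i=1}^k(-1)^i=0$ places it in $N$.

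Now suppose $k$ is odd, so $\det(A_J)=0$. Each $(A_J)_i$ is a $(k-1)\times(k-1)$ antisymmetric matrix of the same shape, hence $\pf\big((A_J)_i\big)=a^{(k-1)/2}\ne0$ for every $i$, and $d:=\sum_{i=1}^k(-1)^i\pf\big((A_J)_i\big)=-a^{(k-1)/2}\ne0$ by the odd-case computation preceding the lemma. Theorem~\ref{t1}(ii) then gives exactly one idempotent with full support, of weight $1$, with coordinates $\lambda_j=(-1)^j\pf\big((A_J)_j\big)/d=(-1)^{j+1}$; that is, $e=\sum_{i=1}^k(-1)^{i+1}e_{p_i}\in H$. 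Finally, distinct supports yield distinct idempotents and every idempotent of $\A$ has some nonempty support, so counting one idempotent per nonempty $J$ over the $2^n-1$ nonempty subsets of $\{1,\dots,n\}$ (together with the zero element) gives the announced total of $2^n$.

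I expect essentially no obstacle here beyond routine bookkeeping: the substance of the lemma is packaged into Theorem~\ref{t1} and into the pfaffian identities for the constant-$a$ matrix already established just above. The only genuine computation is the even case, where one must check that the Cramer-type formula (\ref{t12-0}) collapses to $(-1)^j/(2a)$ and that this vector solves (\ref{id0}); both come down to the same elementary evaluation of alternating $\pm1$ sums in which the Heaviside sign $\theta(j-i)$ must be tracked carefully, and that sign-tracking is the main (minor) point to get right.
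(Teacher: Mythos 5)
Your proposal is correct and follows exactly the route the paper intends: the paper merely computes $\pf(A)=a^{n/2}$ (for $n$ even) and $\sum_i(-1)^i\pf(A_i)=-a^{(n-1)/2}$ (for $n$ odd) and then declares the lemma ``a corollary of Theorem \ref{t1}'' without further detail. You have simply filled in the bookkeeping the paper omits (restriction to the subalgebra spanned by $\{e_{p_1},\dots,e_{p_k}\}$, the evaluation of the alternating sum in (\ref{t12-0}), and the existence check via (\ref{id0}) needed because part (i) of the theorem only gives uniqueness), and all of it checks out.
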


\section{Automorphisms}
In the following we will denote by $\Aut (\A)$ the automorphism group of the algebra $\A$. Firstly, we observe that for each automorphism $\sigma$ of  $\A$, we have that  $\sigma(\Id(\A))= \Id(\A)$. Because the algebra $\A$ is spanned by  $\Id(\A)$, there exists a group monomorphism from $\Aut(\A)$ to $S_{\Id(\A)}$ where  $S_{\Id(\A)}$ is the set of all permutations of the set $\Id(\A)$. In particular, if  $\Id(\A)$ is finite, then $\Aut(\A)$ is finite. 
Thus, using  Lemma \ref{l1} and  Theorem \ref{t1}, we see that  in a generic Lotka-Volterra algebra of dimension $n$, there exist at most  $ {2^n-1 \choose n}$ automorphisms. Particular cases were analyzed in \cite{Yoon} and \cite{Micali}:
\begin{enumerate}
	\item If $A=(0)$, then $\Aut(\A)$ is the set of all  linear automorphisms $\sigma$ of $\A$ such that $\sigma(H)\subseteq H$.
	\item Let  $n=2$. If $a_{12}\not\in \{0,\,\sfrac{1}{2}, - \sfrac{1}{2}\}$, then $\Aut(\A) = \{ \text{Id} \}$ where $\text{Id}$ is the identity mapping. If $a_{ij}=\sfrac{1}{2}$ with $\{i,j\}=\{1,2\}$, then $\Aut(\A)= \{ \text{Id}, \sigma \}$, where $\sigma(e_i)= e_j-e_i$ and $\sigma(e_j)=e_j$.
	\item Let $I$ be a subset of $\Psi=\{1,2,\ldots,n\}$ with at least 2 elements such that $a_{ij}=0$ and $a_{ik} = a_{jk}$   for all $i,j\in I$ and all $k \in \Psi\setminus I$. Let $U$ be the vector subspace of $\A$ spanned by $\{e_i \, | \, i\in I\}$. Then the set $\Aut(\A | I)$   of all   linear mappings $f$ from  $\A$ to $\A$  such that $\{f( e_i) \, | \, i\in I\}$   is linearly independent and contained in $H \cap U $ and $f(e_k)= e_k$  for all   $k \in \Psi\setminus I$, is an infinite subgroup  of  $\Aut(\A)$.
\end{enumerate}

\begin{Que} \rm If $\Aut(\A)$ is a infinite group, then there  exists $I \subset \{1,2,\ldots,n\}$ with at least 2 elements such that $a_{ij}=0$ and $a_{ik} = a_{jk}$   for all $i,j\in I$ and $k \in  \{1,2,\ldots,n\}\setminus I$?
\end{Que}

\begin{Le}\label{laut1} If $\omega$ is uniquely determined, then $\sigma(H)=H$ for all $\sigma\in \Aut(\A)$.
\end{Le}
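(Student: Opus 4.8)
The plan is to exploit the assumed uniqueness of the weight homomorphism by manufacturing, out of an arbitrary automorphism $\sigma$, a second weight homomorphism and then forcing it to coincide with $\omega$. First I would observe that the composite $\omega\circ\sigma\colon\A\longrightarrow F$ is again an algebra homomorphism: it is $F$-linear as a composition of $F$-linear maps, and for $x,y\in\A$ one has $(\omega\circ\sigma)(xy)=\omega\big(\sigma(x)\sigma(y)\big)=\omega(\sigma(x))\,\omega(\sigma(y))=(\omega\circ\sigma)(x)\,(\omega\circ\sigma)(y)$, using in turn that $\sigma$ preserves multiplication and that $\omega$ is multiplicative.

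Next I would check that $\omega\circ\sigma$ is nontrivial. Since $\sigma$ is an automorphism it is surjective, and $\omega$ is surjective onto $F$ (for instance $\omega(e_1)=1$), so $\omega\circ\sigma$ is surjective and in particular not the zero map. Hence $\omega\circ\sigma$ is a weight homomorphism of $\A$. By hypothesis $\omega$ is the \emph{unique} weight homomorphism, so $\omega\circ\sigma=\omega$.

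From $\omega\circ\sigma=\omega$ the inclusion $\sigma(H)\subseteq H$ is immediate: if $\omega(x)=1$ then $\omega(\sigma(x))=(\omega\circ\sigma)(x)=\omega(x)=1$, so $\sigma(x)\in H$. Applying the same reasoning to the automorphism $\sigma^{-1}$ yields $\sigma^{-1}(H)\subseteq H$, i.e. $H\subseteq\sigma(H)$, and therefore $\sigma(H)=H$. (Equivalently, $\omega\circ\sigma=\omega$ forces $\sigma(N)=N$ for the barideal $N=\ker\omega$, and $H$ is the unique coset of $N$ on which $\omega$ takes the value $1$, so it is preserved.)

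As for where the difficulty lies: there is essentially no obstacle. The only step that could be overlooked is the nontriviality of $\omega\circ\sigma$, which is exactly what the bijectivity of $\sigma$ supplies; the rest is a formal manipulation. The content of the lemma is simply that when $\omega$ is uniquely determined the (unique) weight homomorphism is canonical, hence automatically invariant under every automorphism.
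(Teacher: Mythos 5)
Your proof is correct and follows exactly the paper's own argument: compose $\omega$ with $\sigma$ to get a nonzero weight homomorphism, invoke uniqueness to conclude $\omega\circ\sigma=\omega$, and deduce $\sigma(H)=H$. You simply spell out the routine verifications (multiplicativity, nontriviality, the reverse inclusion via $\sigma^{-1}$) that the paper leaves implicit.
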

\begin{proof} If $\sigma\in \Aut(\A)$, then $\omega \circ \sigma$ is a nonzero homomorphism from $\A$ to $F$. Since $\omega$ is unique, we have that $\omega=\omega\circ\sigma$.
\end{proof}

\begin{Le}\label{laut2}  Let $\sigma \in \Aut(\A)$ and
	assume  $a_{ij} =a\ne 0$ for all  $i<j$. If $\sigma(H)=H$, then $\sigma(e_1)=e_1$ and $\sigma(e_n)=e_n$.
\end{Le}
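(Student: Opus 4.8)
\ The plan is to study the multiplication operators $L_x\colon y\mapsto xy$ of $\A$ and to use that any $\sigma\in\Aut(\A)$ satisfies $L_{\sigma(x)}=\sigma L_x\sigma^{-1}$, so that $L_{e_1}$ and $L_{\sigma(e_1)}$ are conjugate and in particular have the same characteristic polynomial, and likewise for $e_n$. Write $c_T(x)=\det(x\,\Id-T)$ for the characteristic polynomial of a linear operator $T$. From $\sigma(H)=H$ one gets $\omega\circ\sigma=\omega$ (both agree on $H$, which spans $\A$) and $\sigma(N)=N$; moreover $\sigma$ carries idempotents to idempotents, so $\sigma(e_1)$ and $\sigma(e_n)$ are idempotents of weight $1$. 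By Lemma~\ref{le6} every idempotent of weight $1$ is of the form $e_J:=\sum_{s=1}^{k}(-1)^{s+1}e_{p_s}$ for some $J=\{p_1<\cdots<p_k\}$ with $k$ odd, and $e_1=e_{\{1\}}$, $e_n=e_{\{n\}}$. So the problem reduces to computing $c_{L_{e_J}}$ and checking that it detects $e_1$ and $e_n$ among all the $e_J$.

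For the computation I would argue as follows. Since every structure constant equals $a$, the span $\B=\langle e_{p_1},\dots,e_{p_k}\rangle$ is a subalgebra — itself a $k$-dimensional Lotka--Volterra algebra with constant $a$ — it is $L_{e_J}$-invariant, and $e_J$ is its full-support idempotent. Using $e_ie_j=(\frac12+a)e_i+(\frac12-a)e_j$ for $i<j$ one checks
$$L_{e_J}(e_{p_s})-L_{e_J}(e_{p_{s+1}})=\Bigl(\tfrac12-(-1)^{s+1}a\Bigr)\bigl(e_{p_s}-e_{p_{s+1}}\bigr)\qquad(1\le s\le k-1),$$
so $\{e_J\}\cup\{e_{p_s}-e_{p_{s+1}}:1\le s\le k-1\}$ is an eigenbasis of $L_{e_J}|_{\B}$, and since $k$ is odd this gives $c_{L_{e_J}|_{\B}}(x)=(x-1)(x-\tfrac12-a)^{(k-1)/2}(x-\tfrac12+a)^{(k-1)/2}$. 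On the quotient $\A/\B$, writing $\{1,\dots,n\}\setminus J$ as the union of $k+1$ consecutive blocks ("gaps") $I_0,I_1,\dots,I_k$ of sizes $g_0,\dots,g_k$, one checks $e_J e_j\equiv(\tfrac12+(-1)^r a)\,e_j\pmod{\B}$ for $j\in I_r$, so $L_{e_J}$ acts diagonally on $\A/\B$ in the basis $\{\bar e_j\}$. Multiplying the two factors,
$$c_{L_{e_J}}(x)=(x-1)\,(x-\tfrac12-a)^{p}\,(x-\tfrac12+a)^{q},\qquad p=\tfrac{k-1}{2}+\!\!\sum_{r\ \mathrm{even}}\!\!g_r,\quad q=\tfrac{k-1}{2}+\!\!\sum_{r\ \mathrm{odd}}\!\!g_r,$$
with $p+q=n-1$. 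The crucial point is that $p=0$ forces $k=1$ and $g_0=p_1-1=0$, i.e.\ $J=\{1\}$; symmetrically $q=0$ forces $J=\{n\}$.

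To conclude, the formula gives $c_{L_{e_1}}(x)=(x-1)(x-\tfrac12+a)^{n-1}$ (the case $p=0$), hence $c_{L_{\sigma(e_1)}}(x)=(x-1)(x-\tfrac12+a)^{n-1}$ as well; comparing with $c_{L_{\sigma(e_1)}}(x)=(x-1)(x-\tfrac12-a)^{p}(x-\tfrac12+a)^{q}$ forces $p=0$, so $\sigma(e_1)=e_1$, and likewise $q=0$ and $\sigma(e_n)=e_n$ from $c_{L_{e_n}}$. I expect the main obstacle to be the characteristic-polynomial computation for a general $e_J$, in particular the bookkeeping of the gap decomposition of $\A/\B$. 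A secondary point to handle with care is $a=\pm\tfrac12$: then two of $1,\tfrac12+a,\tfrac12-a$ coincide and the comparison of characteristic polynomials must be made at the level of the multiplicity of the root $1$; alternatively one observes directly that $L_{e_1}=\Id$ when $a=-\tfrac12$ and $L_{e_n}=\Id$ when $a=\tfrac12$, so that $e_1$, resp.\ $e_n$, is the identity element of $\A$, which is unique and therefore fixed by every automorphism.
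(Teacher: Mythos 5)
Your proof is correct, but it follows a genuinely different route from the paper's. The paper's argument is a one-line multiplicative characterization: by Lemma~\ref{le6} every weight-$1$ idempotent is $e_J=\sum_{s}(-1)^{s+1}e_{p_s}$ with $|J|$ odd, and a short computation shows $e_1e=\bigl(\tfrac12+a\bigr)e_1+\bigl(\tfrac12-a\bigr)e$ for \emph{every} $e\in\Id_1(\A)$; conversely, if some $w\in\Id_1(\A)$ has this property, then testing it against $e=e_1$ and comparing with $e_1w=\bigl(\tfrac12+a\bigr)e_1+\bigl(\tfrac12-a\bigr)w$ gives $2aw=2ae_1$, so $w=e_1$ since $a\neq0$. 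Thus $e_1$ (and dually $e_n$) is singled out inside $\Id_1(\A)$ by a first-order identity that any $\sigma$ with $\sigma(\Id_1(\A))=\Id_1(\A)$ must preserve. You instead use the spectrum of the left-multiplication operator: $L_{\sigma(e_1)}=\sigma L_{e_1}\sigma^{-1}$, so the characteristic polynomial is a conjugation invariant, and you compute $c_{L_{e_J}}$ for an arbitrary weight-$1$ idempotent via the invariant subalgebra $\langle e_{p_1},\dots,e_{p_k}\rangle$ and the gap decomposition of the quotient. I checked your eigenvalue formulas ($L_{e_J}(e_{p_s})-L_{e_J}(e_{p_{s+1}})=(\tfrac12-(-1)^{s+1}a)(e_{p_s}-e_{p_{s+1}})$ and $e_Je_j\equiv(\tfrac12+(-1)^ra)e_j$ modulo the subalgebra for $j$ in the $r$-th gap) and the conclusion that $p=0$ forces $J=\{1\}$; all of this is right, and your separate treatment of $a=\pm\tfrac12$ (where two roots collide, or where $e_1$ resp.\ $e_n$ is the unit) closes the only delicate case. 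What the comparison buys: your spectral invariant is heavier to compute but more portable --- it does not depend on guessing the right product identity and would apply to other constant-matrix questions (indeed it recovers the full list of $c_{L_{e_J}}$, which is more information than the lemma needs) --- while the paper's characterization is shorter, needs only the products of pairs of weight-$1$ idempotents, and avoids all multiplicity bookkeeping.
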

\begin{proof} Because, $\sigma(H)=H$, we have $\sigma(\Id_1(\A))=\Id_1(\A)$. Next, by Lemma \ref{le6} we know that  $e_1$ is the unique idempotent element of $\A$ with weight 1  satisfying $$e_1 e= \Big(\frac{1}{2}+a\Big)e_1+ \Big(\frac{1}{2}-a\Big)e$$
	for all $e\in \Id_1(\A)$ and $e_n$ is the unique idempotent element of $\A$ with weight 1 satisfying $$ee_n = \Big(\frac{1}{2}+a\Big)e+\Big(\frac{1}{2}-a\Big)e_n$$
	for all $e\in \Id_1(\A)$. This proves the lemma.
\end{proof}
\begin{Le}\label{laut3}
	 Assume  $a_{ij} =a $ for all  $i<j$. Then the linear mapping $\sigma_k$, for $2 \leq k \leq n-1$, defined by $\sigma_k(e_k)= e_{k-1}-e_k+e_{k+1}$ and  $\sigma_k(e_j) = e_j$ for all $j\ne k$   is an automorphism of the algebra $\A$. Furthermore, $\sigma_k\circ \sigma_k = \Id$.
\end{Le}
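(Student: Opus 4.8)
The plan is to check directly that $\sigma_k$ is multiplicative on the natural basis $\Phi$ and that $\sigma_k\circ\sigma_k=\Id$; since the latter makes $\sigma_k$ a linear bijection, the two facts together give $\sigma_k\in\Aut(\A)$. Write $f:=\sigma_k(e_k)=e_{k-1}-e_k+e_{k+1}$. Because $a_{ij}=a$ for all $i<j$, the multiplication table is $e_i^2=e_i$ for every $i$ and, for $i<j$, $e_ie_j=(\tfrac12+a)e_i+(\tfrac12-a)e_j$; equivalently $e_ie_j=(\tfrac12+a_{ij})e_i+(\tfrac12+a_{ji})e_j$ for all $i\ne j$. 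The involution identity is the quick part: $\sigma_k(e_j)=e_j$ for $j\ne k$, and $\sigma_k(f)=e_{k-1}-(e_{k-1}-e_k+e_{k+1})+e_{k+1}=e_k$, so $\sigma_k^2=\Id$. It then remains to verify $\sigma_k(uv)=\sigma_k(u)\sigma_k(v)$ for $u,v\in\Phi$.

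Only products involving $e_k$ are moved by $\sigma_k$. For $e_k^2=e_k$ one needs $f^2=f$: computing $\omega_i(f)=a_{i,k-1}-a_{i,k}+a_{i,k+1}$ one sees it vanishes for $i\in\{k-1,k,k+1\}=\supp(f)$, so the correction term in (\ref{x2}) disappears and $f^2=\omega(f)f=f$ (when $a\neq0$ this just records that $f$ is the weight-$1$ idempotent with support $\{k-1,k,k+1\}$ furnished by Lemma \ref{le6}). For the remaining products, commutativity and linearity of $\sigma_k$ reduce everything to proving $\sigma_k(e_ke_j)=fe_j$ for each fixed $j\ne k$ (using $\sigma_k(e_j)=e_j$).

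For this last identity I would split on the position of $j$. In the ``far'' case $j\notin\{k-1,k,k+1\}$, the three entries $a_{k-1,j},a_{k,j},a_{k+1,j}$ all equal the same value $\epsilon_0\in\{a,-a\}$ (namely $a$ if $j>k+1$ and $-a$ if $j<k-1$), and moreover $a_{kj}=\epsilon_0$, $a_{jk}=-\epsilon_0$; expanding $fe_j=e_{k-1}e_j-e_ke_j+e_{k+1}e_j$ with the table above and using that the coefficients $1,-1,1$ sum to $1$ and that $e_{k-1}-e_k+e_{k+1}=f$, this collapses to $(\tfrac12+\epsilon_0)f+(\tfrac12-\epsilon_0)e_j$, which equals $\sigma_k\big((\tfrac12+a_{kj})e_k+(\tfrac12+a_{jk})e_j\big)=\sigma_k(e_ke_j)$. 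The two ``adjacent'' cases $j=k-1$ and $j=k+1$ are handled by the same kind of direct expansion of $fe_j=e_{k-1}e_j-e_ke_j+e_{k+1}e_j$ versus $\sigma_k(e_ke_j)$, each boiling down to a one-line comparison of the coordinates in $e_{k-1},e_k,e_{k+1}$.

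Every step is routine, and no step uses $a\ne0$, consistent with the hypothesis being merely $a_{ij}=a$. I do not anticipate a genuine obstacle; the only thing that requires care is the bookkeeping in the case split for $e_ke_j$ — in particular spotting that in the far case the three entries of the relevant row of $A$ coincide, which is exactly what forces $fe_j$ back into the span of $f$ and $e_j$ and matches $\sigma_k(e_ke_j)$.
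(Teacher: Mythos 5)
Your proposal is correct and follows essentially the same route as the paper's proof: a direct verification on the natural basis that $f=e_{k-1}-e_k+e_{k+1}$ is a weight-one idempotent and that $fe_j$ reproduces $\sigma_k(e_ke_j)$ for $j\ne k$, plus the (easy) involution check. The paper merely treats all $j\ne k$ in one uniform computation and cites $f\in \mathrm{Id}_1(\A)$ rather than deriving it from (\ref{x2}), so the differences are only organizational.
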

\begin{proof} Let $u= e_{k-1}-e_k+e_{k+1}$. Then $u\in \Id_1(\A)$, 
	$e_iu = e_ie_{k-1}-e_ie_k+e_ie_{k+1}= (\sfrac{1}{2}+a)e_i + (\sfrac{1}{2}-a)u$ and $ue_j =e_{k-1}e_j-e_ke_j+e_{k+1}e_j= (\sfrac{1}{2}+a)u + (\sfrac{1}{2}-a)e_j$ for all $i,j$ with  $1\leq i < k < j \leq n$. This shows that $\sigma_k$ is an automorphism of $\A$.
\end{proof}
Let $\A=\A(n;a)$ be a Lotka-Volterra algebra of dimension $n$, related to the  matrix $A=(a_{ij})$ with respect to the basis $\Phi=\{e_1,\ldots,e_n\}$ where $a_{ij}=a$ for all $i<j$.  It is clear that  $\A(n;a)\cong\A(n;-a)$ for all $a\in F$. If   $a\ne \pm \sfrac{1}{2}$, then  we have that  $\sigma(H)=H$ for all automorphisms $\sigma$ of $\A$.  Let $\sigma_k$ be the automorphism of $\A$ defined in previous lemma and denote by $\Ele(\A)$ the subgroup of $\Aut(\A)$ generated by all the mappings $\sigma_k$ of $\A$ defined in previous lemma. Each automorphism of $\A$ in   $\Ele(\A)$ can be represented in the following form
$$\sigma_{i_1i_2\cdots i_k}=\sigma_{i_1}\sigma_{i_2}\cdots \sigma_{i_k}$$
where $i_1,i_2, \ldots, i_k \in \{2,3,\ldots, n-1\}$.
An interesting question consists in to know when $\Aut(\A)=\Ele(\A)$.

We have a natural action of the symmetric group   $S_n$ to the set of all antisymmetric matrices of order $n$ given by $A_\tau = (a_{\tau(i),\tau(j)})$. If $\A$ is a Lotka-Volterra algebra with associated matrix $A$ with respect to the basis $\Phi=\{1,2,\ldots,n\}$, then  $\A$ is also a Lotka-Volterra algebra with associated matrix $A_\tau$    with respect to the basis $\Phi_\tau=\{e_{\tau(1)},e_{\tau(2)},\ldots, e_{\tau(n)} \}$.
 
\section{Dimension 3}
Let now $n=\dim(\A)=3$ and denote by $\Skew(a_{12},a_{13},a_{23})$ the antisymmetric matrix $A=(a_{ij})$. If $\Skew(a,b,c)$ is the associated matrix of a Lotka-Volterra algebra $\A$, then the new antisymmetric  matrices given in Table 1  are also associated matrices of  $\A$.
\begin{table}[h!]
	\centering
	\scriptsize \begin{tabular}{|c||c|c|c|c|c|c|}\hline
	\text{\scriptsize  Permutação }$\tau$ & $\Id$ & (12) & (13) & (23) & (123) & (132) \\ \hline
	\text{Matrix } $A_\tau$ &$\Skew(a,b,c)$&$\Skew(-a,c,b)$&$\Skew(-c,-b,-a)$&$\Skew(b,a,-c)$& $\Skew(-b,-c,a)$&$\Skew(c,-a,-b)$ \\ \hline
	\end{tabular}
 \caption{ Action of the symmetric group $S_3$}
	\label{table:1}
\end{table}

In the following, $\A(a,b,c)$  will denote a Lotka-Volterra algebra of dimension 3  associated to the antisymmetric matrix  $\Skew(a,b,c)$ with respect to the basis $\Phi=\{e_1,e_2,e_3\}$. We  previously showed that when $abc\ne 0$ and $b\ne a+c$, then $\A(a,b,c)$ has exactly seven nonzero idempotent elements
 $$\Big\{e_1,e_2,e_3,\frac{ce_1-be_2+a e_3}{a-b+c},\frac{1}{2a}\big(-e_1+e_2\big), \frac{1}{2b}\big(-e_1+e_3\big),
 \frac{1}{2c}\big(-e_2+e_3\big)\Big\}.$$
The following useful  lemma is an immediate consequence of Lemma \ref{l1} and Table 1.

\begin{Le}
	Let  $\dim\A=3$ and  assume that there exists $\sigma\in \Aut(\A)$ with $\sigma(H)\ne H$. Then $\A$ is isomorphic to one of the following  Lotka-Volterra algebras: $\A(-\sfrac{1}{2},-\sfrac{1}{2},c)$ or $\A(\sfrac{1}{2},\sfrac{1}{2},c)$  with $c\in F $.
\end{Le}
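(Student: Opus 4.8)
The plan is to deduce everything from Lemma~\ref{l1}. Suppose $\sigma\in\Aut(\A)$ with $\sigma(H)\ne H$. Since $\sigma$ is an algebra automorphism, $\omega\circ\sigma$ is again a nonzero homomorphism $\A\to F$; and if $\omega\circ\sigma=\omega$, then $\sigma$ would preserve the fiber $H=\omega^{-1}(1)$, contrary to hypothesis. Hence $\A$ has two distinct weight homomorphisms, so $\omega$ is not uniquely determined, and Lemma~\ref{l1} provides a nontrivial partition $\{1,2,3\}=I\cup J$ with $e_ie_j=e_i$ for all $i\in I$, $j\in J$. Comparing with $e_ie_j=(\sfrac{1}{2}+a_{ij})e_i+(\sfrac{1}{2}+a_{ji})e_j$, the condition $e_ie_j=e_i$ means precisely $a_{ij}=\sfrac{1}{2}$ (the relation $a_{ji}=-\sfrac{1}{2}$ is then automatic by antisymmetry). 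So the task reduces to the following: assuming there is a nontrivial partition $\{1,2,3\}=I\cup J$ with $a_{ij}=\sfrac{1}{2}$ for all $i\in I$, $j\in J$, show that $A=\Skew(a_{12},a_{13},a_{23})$ is, up to the $S_3$-action of Table~\ref{table:1}, of the form $\Skew(\sfrac{1}{2},\sfrac{1}{2},c)$ or $\Skew(-\sfrac{1}{2},-\sfrac{1}{2},c)$ for some $c\in F$.

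Because $\dim\A=3$, exactly one of $I$, $J$ is a singleton, and I split into these two cases. If $I=\{p\}$ with $J=\{1,2,3\}\setminus\{p\}$, pick any $\tau\in S_3$ with $\tau(1)=p$. By Table~\ref{table:1}, $\A$ is also the Lotka-Volterra algebra associated with $A_\tau=(a_{\tau(i)\tau(j)})$ relative to $\Phi_\tau$; its $(1,2)$- and $(1,3)$-entries are $a_{p,\tau(2)}$ and $a_{p,\tau(3)}$, both equal to $\sfrac{1}{2}$ since $\tau(2),\tau(3)\in J$, so $A_\tau=\Skew(\sfrac{1}{2},\sfrac{1}{2},c)$ with $c=\pm a_{qr}$, where $J=\{q,r\}$. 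Thus $\A\cong\A(\sfrac{1}{2},\sfrac{1}{2},c)$. If instead $J=\{p\}$ is the singleton and $I=\{1,2,3\}\setminus\{p\}$, the same $\tau$ gives $(A_\tau)_{12}=a_{p,\tau(2)}$ and $(A_\tau)_{13}=a_{p,\tau(3)}$ with $\tau(2),\tau(3)\in I$; now the hypothesis reads $a_{\tau(2),p}=a_{\tau(3),p}=\sfrac{1}{2}$, hence $a_{p,\tau(2)}=a_{p,\tau(3)}=-\sfrac{1}{2}$ by antisymmetry, so $A_\tau=\Skew(-\sfrac{1}{2},-\sfrac{1}{2},c)$ and $\A\cong\A(-\sfrac{1}{2},-\sfrac{1}{2},c)$. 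This exhausts the cases.

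The argument is essentially bookkeeping, so I do not anticipate a genuine obstacle; the only point requiring care is checking that the chosen permutation of Table~\ref{table:1} really carries the two ``$\sfrac{1}{2}$'' entries into the $(1,2)$ and $(1,3)$ positions and keeping track of the resulting value of the free entry $c$. This can be verified either by reading off the relevant rows of Table~\ref{table:1} directly, or, as indicated above, by computing $(a_{\tau(i)\tau(j)})$ for a $\tau$ with $\tau(1)=p$.
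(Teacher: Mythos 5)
Your proof is correct and follows essentially the same route as the paper's: both pass from $\sigma(H)\ne H$ to the second weight homomorphism $\chi=\omega\circ\sigma$, use the resulting partition $\Phi=I\cup J$ (equivalently, Lemma \ref{l1}), and split according to which part is the singleton. The paper simply asserts the two resulting isomorphism types, while you carry out the Table 1 bookkeeping explicitly; the content is the same.
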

\begin{proof} Let $\chi= \omega\circ\sigma$. Because $\sigma(H)\ne H$, we have that $\chi$ is a weight homomorphism of $\A$ different from $\omega$. Let $I= \{e_i\in  \Phi \, | \, \chi(e_i) =0\}$. If $I$ contains exactly one element, then $\A$ is isomorphic to $\A(\sfrac{1}{2},\sfrac{1}{2},c)$ for some $c\in F$. In the other case, $I$ contains  2 elements and $\A$ is isomorphic to  $\A(-\sfrac{1}{2},-\sfrac{1}{2},c)$ for some $c\in F$. 
	\end{proof}
\begin{Le}
If  $ c \not\in \{0,\sfrac{1}{2},-\sfrac{1}{2}\}$, then 
$\Aut\big(\A(-\sfrac{1}{2},-\sfrac{1}{2},c)\big)= \{\Id,\eta\}$ where $\eta(e_1)=e_1$, $\eta(e_2)=e_1-e_3$ and $\eta(e_3)=e_1-e_2$.  
\end{Le}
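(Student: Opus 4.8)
The plan is to determine $\Aut(\A)$, for $\A:=\A(-\sfrac{1}{2},-\sfrac{1}{2},c)$, from three rigid features of the algebra. First, reading off the product table (here $a_{12}=a_{13}=-\sfrac{1}{2}$ and $a_{23}=c$) gives $e_1e_j=e_j$ for all $j$, so $e_1$ is the identity element of $\A$ and is therefore fixed by every automorphism. Second, since $c\ne0$, the algebra $\A$ has exactly the seven idempotents already listed, which here are $e_1,e_2,e_3$, $u:=e_1+\frac{1}{2c}(e_2-e_3)$, $f_{12}:=e_1-e_2$, $f_{13}:=e_1-e_3$ and $f_{23}:=\frac{1}{2c}(e_3-e_2)$; note the relation $u+f_{23}=e_1$. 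Third, imposing the homomorphism identity on the product $e_2e_3=(\sfrac{1}{2}+c)e_2+(\sfrac{1}{2}-c)e_3$ — which is precisely where the hypotheses $c\ne\sfrac{1}{2}$ and $c\ne-\sfrac{1}{2}$ are used — shows that $\A$ has only two weight homomorphisms, namely $\omega$ and the map $\chi$ with $\chi(e_1)=1$ and $\chi(e_2)=\chi(e_3)=0$, with barideals $N=\ker\omega=\langle f_{12},f_{13}\rangle$ and $N'=\ker\chi=\langle e_2,e_3\rangle$. As $N\ne N'$ are two-dimensional in a three-dimensional space, $N\cap N'$ is a line, and since it contains $f_{23}$ we get $N\cap N'=F f_{23}$. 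Finally, among the seven idempotents, those lying in $N'$ are $e_2,e_3,f_{23}$ (the other four have nonzero $e_1$-coordinate) and those lying in $N$ are $f_{12},f_{13},f_{23}$ (the weight-zero ones).

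First I would verify that $\eta$ is an automorphism: it is visibly a linear involution with $\eta(e_1)=e_1$, $\eta(e_2)=f_{13}$ and $\eta(e_3)=f_{12}$, it carries idempotents to idempotents, and because $e_i^2=e_i$ the only product relation that must be checked is $\eta(e_2e_3)=\eta(e_2)\eta(e_3)$, that is $(\sfrac{1}{2}+c)f_{13}+(\sfrac{1}{2}-c)f_{12}=(e_1-e_3)(e_1-e_2)$, a one-line expansion. Now let $\sigma\in\Aut(\A)$. Fixing the identity gives $\sigma(e_1)=e_1$. Since $\omega\circ\sigma$ and $\chi\circ\sigma$ are two distinct weight homomorphisms, they coincide with $\omega$ and $\chi$ in some order, so $\sigma$ permutes the pair $\{N,N'\}$, hence preserves the line $N\cap N'=F f_{23}$; as $\sigma(f_{23})$ is a nonzero idempotent of that line (the only one is $f_{23}$), we get $\sigma(f_{23})=f_{23}$, and therefore $\sigma(u)=\sigma(e_1-f_{23})=u$.

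It then remains to split according to whether $\sigma$ fixes or interchanges $N$ and $N'$. If $\sigma(N)=N$ (equivalently $\sigma(N')=N'$), then $\sigma(e_2)$ and $\sigma(e_3)$ are nonzero idempotents of $N'$ other than the fixed point $f_{23}$, so $\{\sigma(e_2),\sigma(e_3)\}=\{e_2,e_3\}$; the transposition is ruled out because it would send $u=e_1+\frac{1}{2c}(e_2-e_3)$ to $e_1+f_{23}\ne u$, so $\sigma$ fixes the whole basis and $\sigma=\Id$. If $\sigma(N)=N'$, then in the same way $\{\sigma(e_2),\sigma(e_3)\}=\{f_{12},f_{13}\}$; the choice $\sigma(e_2)=f_{12}$, $\sigma(e_3)=f_{13}$ again sends $u$ to $e_1+f_{23}\ne u$ and is excluded, while $\sigma(e_2)=f_{13}$, $\sigma(e_3)=f_{12}$ is exactly $\eta$. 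Hence $\Aut(\A)=\{\Id,\eta\}$.

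The step I expect to require the real work is the case analysis in the last paragraph: one must check that the single constraint that $\sigma$ fixes $f_{23}$ (equivalently that $\sigma$ fixes $u$) already eliminates every permutation of the remaining six idempotents except $\Id$ and $\eta$. Organizing the argument by first tracking the unordered pair of barideals $\{N,N'\}$, rather than directly enumerating permutations of a seven-element set, is what reduces this to two short cases. The remaining points needing care are the two identities gluing the ideal data to the idempotent data, $N\cap N'=F f_{23}$ and $u+f_{23}=e_1$, and the plain verification that $\eta$ is multiplicative.
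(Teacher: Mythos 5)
Your proof is correct, and it follows the paper's overall skeleton: fix $e_1$ because it is the unit element, then split into two cases according to whether $\sigma$ preserves the weight homomorphism $\omega$ (your dichotomy $\sigma(N)=N$ versus $\sigma(N)=N'$ is exactly the paper's dichotomy $\omega\circ\sigma=\omega$ versus $\omega\circ\sigma\ne\omega$, and your count of weight homomorphisms is where $c\ne\pm\sfrac{1}{2}$ enters, just as in the paper's appeal to Lemma \ref{l1}). Where you genuinely diverge is the elimination step. The paper rules out the unwanted assignments of $\sigma(e_2),\sigma(e_3)$ by a support argument on products: in the weight-preserving case it observes that $e_ke_4$ has support $\{2,3\}$ and so cannot lie in the span of $e_k$ and $e_4$, which leaves only the pair $(e_2,e_3)$; a parallel support computation handles the other case. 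You instead manufacture a fixed point first: $\sigma$ permutes the two barideals, hence stabilizes the line $N\cap N'=Ff_{23}$, hence fixes the idempotent $f_{23}$ and therefore $u=e_1-f_{23}$, and linearity applied to $u$ then excludes every candidate except $\Id$ and $\eta$. This is a clean alternative that avoids computing products of candidate images (beyond the single multiplicativity check for $\eta$, which you carry out explicitly and the paper leaves implicit); both routes consume the same inputs, namely the list of seven idempotents (needing $c\ne 0$) and the classification of weight homomorphisms (needing $c\ne\pm\sfrac{1}{2}$).
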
\begin{proof} Let $\A= \A(-\sfrac{1}{2},-\sfrac{1}{2},c)$ and take $\sigma \in \Aut(\A)$. 
Because $e_1$ is the unit element of the algebra $\A$,  we have that $\sigma(e_1)=e_1$.  Firstly, assume that  $\omega\circ\sigma = \omega$. Then $\sigma\big(\Id_1(\A)\big)=\Id_1(\A)$ and $\Id_1(\A)=\big\{e_1,e_2,e_3, e_4\big\}$ where $e_4=e_1 + \frac{1}{2c} \big(e_2- e_3\big)$. Because the support of $e_ke_4$ is $\{2,3\}$, we have that $e_ke_4$ is not a linear combination of $e_k$ and $e_4$ for $k=2,3$. Thus, 
 we can easily check that if  
 $u,v\in \Id_1(\A)\setminus \{e_1\}$, $u\ne v$ such that $uv=\big(\sfrac{1}{2}+c\big) u + \big(\sfrac{1}{2}-c\big)v$, then $u=e_2$ and $v=e_3$. This implies that $\sigma(e_2)=e_2$ and $\sigma(e_3)=e_3$ and hence $\sigma=\Id$.
 
We will now consider the other case, that is   $\chi=\omega\circ\sigma \ne \omega$. By Lemma 1, we have that  $\chi(\lambda_1 e_1 +\lambda_2 e_2 + \lambda_3 e_3)= \lambda_1$ for all $\lambda_i\in F$. Therefore, $\sigma(e_2),\sigma(e_3) \in \Id_0(\A) = \big\{e_1-e_2,e_1-e_3, \frac{1}{2c}\big(-e_2+e_3\big)\big\}$. Since the support of the product $(e_1-e_k)(-e_2+e_3)/(2c)$ is contained in the set $\{2,3\}$  for $k=2,3$ and  $\sigma(e_2)\sigma(e_3)=\big(\sfrac{1}{2}+c\big) \sigma(e_2) + \big(\sfrac{1}{2}-c\big)\sigma(e_3)$, we can  easily check that $\sigma(e_2)=e_1-e_3$ and $\sigma(e_3)=e_1-e_2$.  This completes the proof of the lemma.
\end{proof}
\begin{Le} We have that  $\A(-\sfrac{1}{2},-\sfrac{1}{2},-\sfrac{1}{2}) \cong \A(-\sfrac{1}{2},-\sfrac{1}{2},\sfrac{1}{2})$ and 
  $$\Aut\big(\A(-\sfrac{1}{2},-\sfrac{1}{2},-\sfrac{1}{2})\big)=\{\Id, \gamma, \gamma^2,\sigma_2, \sigma_2\gamma, \sigma_2\gamma^2\}$$
	where  $\gamma(e_1)=e_1$, $\gamma(e_2)=e_1-e_3$ and $\gamma(e_3)=e_2-e_3$. Furthermore, $\Aut\big(\A(-\sfrac{1}{2},-\sfrac{1}{2},-\sfrac{1}{2})\big)$ is isomorphic to  $S_3$.
\end{Le}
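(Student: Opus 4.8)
The plan is to obtain the isomorphism from Table~\ref{table:1} and then to compute $\Aut(\A)$ by exploiting the (non-unique) weight homomorphisms of $\A:=\A(-\sfrac12,-\sfrac12,-\sfrac12)$. For the isomorphism, note that the permutation $\tau=(23)$ sends $\Skew(a,b,c)$ to $\Skew(b,a,-c)$ by Table~\ref{table:1}, so with $a=b=c=-\sfrac12$ the algebra $\A$ is also a Lotka--Volterra algebra with associated matrix $\Skew(-\sfrac12,-\sfrac12,\sfrac12)$, this time with respect to the basis $\{e_1,e_3,e_2\}$; hence $\A\cong\A(-\sfrac12,-\sfrac12,\sfrac12)$. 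Before dealing with automorphisms I would record three facts about $\A$: the multiplication is $e_ie_j=e_{\max(i,j)}$, so $e_1$ is the unit of $\A$ and is therefore fixed by every automorphism; since $abc\ne 0$ and $b\ne a+c$, substituting $a=b=c=-\sfrac12$ in the list of seven idempotents displayed just before this section gives that the nonzero idempotents are $e_1,e_2,e_3,e_1-e_2+e_3$ (of weight $1$) and $e_1-e_2,e_1-e_3,e_2-e_3$ (of weight $0$).

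Next I would determine all weight homomorphisms of $\A$. Since each $e_i$ is idempotent, any weight homomorphism $\chi$ satisfies $\chi(e_i)\in\{0,1\}$, and from $\chi(e_ie_j)=\chi(e_i)\chi(e_j)$ together with $e_ie_j=e_{\max(i,j)}$ one checks that the set of indices $i$ with $\chi(e_i)=1$ must be an initial segment of $\{1,2,3\}$; conversely each such choice does define a weight homomorphism. Thus $\A$ has exactly three weight homomorphisms: $\omega$, $\chi_1$ (with $\chi_1(e_1)=1$ and $\chi_1(e_2)=\chi_1(e_3)=0$) and $\chi_2$ (with $\chi_2(e_1)=\chi_2(e_2)=1$ and $\chi_2(e_3)=0$). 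Any $\sigma\in\Aut(\A)$ carries the barideal $\ker\chi$ to $\ker(\chi\circ\sigma^{-1})$, which is again one of these three barideals, so $\sigma$ permutes $\{\ker\omega,\ker\chi_1,\ker\chi_2\}$; this yields a group homomorphism $\pi\colon\Aut(\A)\to S_3$.

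The core step is the injectivity of $\pi$. If $\pi(\sigma)=\Id$, then $\sigma$ stabilizes each of $\ker\omega,\ker\chi_1,\ker\chi_2$, hence also their pairwise intersections, which are the lines $\langle e_3\rangle=\ker\chi_1\cap\ker\chi_2$, $\langle e_1-e_2\rangle=\ker\omega\cap\ker\chi_2$ and $\langle e_2-e_3\rangle=\ker\omega\cap\ker\chi_1$. Since $e_3,\ e_1-e_2,\ e_2-e_3$ are idempotents and a scalar multiple $tu$ of a nonzero idempotent $u$ is idempotent only for $t\in\{0,1\}$, each of these lines contains a unique nonzero idempotent, which $\sigma$ (preserving idempotency and the line) must fix; so $\sigma(e_3)=e_3$, $\sigma(e_2-e_3)=e_2-e_3$ and $\sigma(e_1-e_2)=e_1-e_2$, and these force $\sigma(e_2)=e_2$ and $\sigma(e_1)=e_1$, i.e.\ $\sigma=\Id$. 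In particular $|\Aut(\A)|\le 6$.

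Finally, to see that $\pi$ is onto I would check that $\sigma_2$ is an automorphism (Lemma~\ref{laut3}, with $a=-\sfrac12$) and that $\gamma$ is an automorphism with $\gamma\ne\Id$ and $\gamma^3=\Id$ (a short direct verification on the products $e_ie_j$); evaluating $\chi\mapsto\chi\circ\sigma_2$ and $\chi\mapsto\chi\circ\gamma$ on the basis shows that $\pi(\sigma_2)$ is a transposition and $\pi(\gamma)$ a $3$-cycle, so $\pi$ is surjective. Being a bijective group homomorphism, $\pi$ is an isomorphism, whence $\Aut(\A)\cong S_3$ and $|\Aut(\A)|=6$. Since $\Id,\gamma,\gamma^2,\sigma_2,\sigma_2\gamma,\sigma_2\gamma^2$ are six automorphisms of $\A$ with pairwise distinct images under $\pi$, they are precisely the elements of $\Aut(\A)$. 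The main obstacle is the injectivity of $\pi$: it rests on correctly pinning down the three barideals and their intersection lines, and on the remark that a line through a nonzero idempotent carries no other idempotent; everything else (the isomorphism, the list of idempotents, the surjectivity computations) is routine.
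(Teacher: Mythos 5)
Your proof is correct, but it takes a genuinely different route from the paper's. The paper fixes $e_1$ as the unit, observes that a linear map fixing $e_1$ and sending $e_2,e_3$ into $\Id(\A)$ is an automorphism precisely when $\sigma(e_2)\sigma(e_3)=\sigma(e_3)$ (mirroring $e_2e_3=e_3$), and then enumerates by hand the six ordered pairs $(u,v)$ of distinct idempotents in $\Id(\A)\setminus\{e_1\}$ with $uv=v$; the isomorphism with $S_3$ is then deduced only indirectly, from the group having order $6$ and being non-abelian because $\sigma_2\gamma\ne\gamma\sigma_2$. You instead classify the three weight homomorphisms $\omega,\chi_1,\chi_2$ of $\A(-\sfrac{1}{2},-\sfrac{1}{2},-\sfrac{1}{2})$, let $\Aut(\A)$ act on their kernels to get $\pi\colon\Aut(\A)\to S_3$, prove injectivity by noting that the pairwise intersections of the barideals are the lines $\langle e_3\rangle$, $\langle e_1-e_2\rangle$, $\langle e_2-e_3\rangle$, each carrying a unique nonzero idempotent that must be fixed, and prove surjectivity by computing $\pi(\sigma_2)$ and $\pi(\gamma)$. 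Both arguments are complete (your verification that $\gamma\in\Aut(\A)$ and that $\pi(\gamma)$ is a $3$-cycle plays the role of the paper's pair enumeration). What your approach buys is a structural explanation of \emph{why} the group is $S_3$ --- it is the full permutation group of the three weight functions --- and it avoids the case-by-case check of idempotent products; what the paper's approach buys is the explicit list of all six automorphisms as maps on the natural basis, read off directly from the admissible pairs. The isomorphism $\A(-\sfrac{1}{2},-\sfrac{1}{2},-\sfrac{1}{2})\cong\A(-\sfrac{1}{2},-\sfrac{1}{2},\sfrac{1}{2})$ is obtained in essentially the same way in both arguments, via the transposition $(23)$ in Table 1.
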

\begin{proof} Table 1 implies  that $\A(a,a,c)\cong \A(a,a,-c)$ for all $a,c\in F$. For simplicity, denote  $\A(-\sfrac{1}{2},-\sfrac{1}{2},-\sfrac{1}{2})$ by $\A$.
	Because  $\A$ is a unitary algebra with $e_1$ as unit element, we get that $\sigma(e_1)=e_1$ for all automorphisms $\sigma$  of $\A$. On the other hand, we observe that a linear homomorphism $\sigma$ of $\A$ where $\sigma(e_1)=e_1$ and $\sigma(e_j)\in \Id(\A)$ for $j=2,3$ is an automorphism  of $\A$ if and only  if $\sigma(e_2)\sigma(e_3) = \sigma(e_3)$. Thus, in order to determine $\Aut(\A)$ we  need to  find all the pairs $(u,v)$, $u\ne v$, of  idempotent elements in $\Id(\A)\setminus \{e_1\}$  such that $uv=v$. Using that $\Id(\A)=\{e_1,e_2,e_3,e_1-e_2+e_3,e_1-e_2,e_1-e_3,e_2-e_3\}$ we can easily check that such  pairs  are 
	$(e_2,e_3)$, $(e_2,u_{23})$, $(e_1-e_2+e_3, e_3)$,$(e_1-e_2+e_3, u_{12})$, $(u_{13},u_{12})$ and $(u_{13},u_{23})$ where $u_{ij}=e_i-e_j$.  Because $\sigma_2\circ \gamma \ne \gamma\circ \sigma_2$, we have that this automorphism group of order 6 is not abelian and  hence it is isomorphic to  $S_3$. This completes the proof of the lemma.
\end{proof}
\begin{Le}
	We have that $\Aut\big(\A(-\sfrac{1}{2},-\sfrac{1}{2},0)\big)=\{f_{a,b},g_{a,b} \,\, | \,\, a,b\in F, \,\, a\ne b\}$ where 
	$$f_{a,b}(e_1)=e_1, \quad 
	f_{a,b}(e_2)= e_1 -u_a,\quad 
	f_{a,b}(e_3)= e_1 -u_b,$$
	$$g_{a,b}(e_1)=e_1, \quad 
	g_{a,b}(e_2)=  u_a,\quad 
	f_{a,b}(e_3)= u_b$$
	and $u_a=ae_2+(1-a)e_3$.
\end{Le}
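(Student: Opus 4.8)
The plan is to mimic the structure of the previous two lemmas (for $c\notin\{0,\pm\sfrac12\}$ and for $c=-\sfrac12$), adapting to the degenerate case $c=0$. Write $\A=\A(-\sfrac12,-\sfrac12,0)$. Since $a_{12}=a_{13}=-\sfrac12$, the element $e_1$ satisfies $e_1e_j=e_j$ for $j=2,3$ (and $e_1e_1=e_1$), so $e_1$ is the unit of $\A$ and hence $\sigma(e_1)=e_1$ for every $\sigma\in\Aut(\A)$. The remaining data of $\sigma$ is the pair $(\sigma(e_2),\sigma(e_3))$, which must be a pair of distinct idempotents, and the automorphism condition reduces — exactly as in the $c=-\sfrac12$ lemma — to a single product identity. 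Here $e_2e_3=(\sfrac12+c)e_2+(\sfrac12-c)e_3=\sfrac12 e_2+\sfrac12 e_3$ and more usefully $a_{23}=0$, so by the remarks in Section 2 every $\alpha e_2+\beta e_3$ with $\alpha+\beta=1$ is idempotent: the idempotents of weight $1$ supported in $\{2,3\}$ form the whole line $\{u_a=ae_2+(1-a)e_3\mid a\in F\}$, and correspondingly $\{e_1-u_a\mid a\in F\}$ are idempotents of weight $0$.

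First I would enumerate $\Id(\A)$ using relations (\ref{id0}), (\ref{id1}) (equivalently Lemma \ref{l2} with $b=a+c$ in the shape forced by Table 1): the weight-$1$ idempotents are $\{e_1\}\cup\{u_a\mid a\in F\}$ and the weight-$0$ idempotents are $\{e_1-u_a\mid a\in F\}$, together with possibly $e_2,e_3$ themselves (which are $u_1$ and $u_0$). Next, split into the two cases according to whether $\chi:=\omega\circ\sigma$ equals $\omega$ or not. If $\omega\circ\sigma=\omega$, then $\sigma(e_2),\sigma(e_3)\in\Id_1(\A)\setminus\{e_1\}=\{u_a\mid a\in F\}$, say $\sigma(e_2)=u_a$, $\sigma(e_3)=u_b$ with $a\neq b$; the identity $\sigma(e_2)\sigma(e_3)=\sigma(e_3)$ (the image of $e_2e_3$... wait — one must check which product identity $e_2,e_3$ satisfy; $e_2e_3=\sfrac12e_2+\sfrac12e_3$, so the defining relation to preserve is $\sigma(e_2)\sigma(e_3)=\sfrac12\sigma(e_2)+\sfrac12\sigma(e_3)$). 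A direct computation with $u_au_b$ using the product formula from \cite{Yoon} shows this holds automatically because $a_{23}=0$ makes the whole line of $u$'s a subalgebra of idempotents; one only needs $\sigma$ to be bijective, i.e. $a\neq b$. This yields the family $g_{a,b}$. If $\chi\neq\omega$, then by Lemma \ref{l1} and Table 1 one forces $\chi(\lambda_1e_1+\lambda_2e_2+\lambda_3e_3)=\lambda_1$ (the one-element case $I=\{e_1\}$), so $\sigma(e_2),\sigma(e_3)\in\Id_0(\A)=\{e_1-u_c\mid c\in F\}$, giving $\sigma(e_2)=e_1-u_a$, $\sigma(e_3)=e_1-u_b$ with $a\neq b$; one checks $(e_1-u_a)(e_1-u_b)=\sfrac12(e_1-u_a)+\sfrac12(e_1-u_b)$ again using $e_1$ unit and $a_{23}=0$, producing the family $f_{a,b}$.

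Finally I would verify the converse inclusion: each $f_{a,b}$ and $g_{a,b}$ with $a\neq b$ is a genuine algebra automorphism. Bijectivity is clear since $\{u_a,u_b\}$ (resp. $\{e_1-u_a,e_1-u_b\}$) together with $e_1$ is a basis when $a\neq b$ (the vectors $u_a-u_b=(a-b)(e_2-e_3)$ and, say, $u_a$ span the plane $\langle e_2,e_3\rangle$). Multiplicativity on the nine basis products reduces, after using $\sigma(e_1)=e_1$ is the unit on both sides, to the single identity on $\sigma(e_2)\sigma(e_3)$ checked above, plus the trivial $\sigma(e_i)^2=\sigma(e_i)$ since all images are idempotent. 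The main obstacle — and the only place real care is needed — is the bookkeeping in the $\chi\neq\omega$ branch: one must use Lemma \ref{l1} to rule out the two-element partition $I=\{e_2,e_3\}$ (which would need $a_{23}=0$ \emph{and} $a_{12}=a_{13}$; here $a_{23}=0$ and $a_{12}=a_{13}=-\sfrac12$ do both hold, so in fact $\chi$ with $I=\{e_2,e_3\}$ \emph{is} also a weight homomorphism), so one cannot simply quote "$I$ has one element" — instead one must separately analyze the partition $I=\{e_2,e_3\}$, observe it forces $\sigma(e_1)\in\Id_0$, contradicting $\sigma(e_1)=e_1\in\Id_1$, and discard it. Once that subtlety is handled the two families are exactly the automorphisms, completing the proof.
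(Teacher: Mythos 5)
Your proposal is correct and follows the same skeleton as the paper's proof: $e_1$ is the unit of $\A(-\sfrac{1}{2},-\sfrac{1}{2},0)$, so $\sigma(e_1)=e_1$; $\Id(\A)=\{e_1,u_a,e_1-u_a \,|\, a\in F\}$; and everything reduces to finding the pairs $(u,v)$ of distinct idempotents in $\Id(\A)\setminus\{e_1\}$ with $uv=\tfrac{1}{2}(u+v)$. The one place you genuinely diverge is in how the mixed pairs $(u_a,\,e_1-u_b)$ are excluded: the paper does this by the direct computation $u_a(e_1-u_b)=\tfrac{1}{2}(u_a-u_b)$ together with $\det(u_a,e_1-u_b,u_a(e_1-u_b))\ne 0$, so the product is not even in the span of the two factors, whereas you classify the weight homomorphisms via Lemma \ref{l1} to force $\sigma(e_2)$ and $\sigma(e_3)$ to have equal weight. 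Both routes work; yours explains structurally why the two families $f_{a,b}$ and $g_{a,b}$ correspond to the two weight functions of $\A$, while the paper's is shorter and needs no partition bookkeeping. One concrete error in your side remark: the map with $\chi(e_1)=0$, $\chi(e_2)=\chi(e_3)=1$ is \emph{not} a weight homomorphism here. With Lemma \ref{l1}'s convention ($I=\Phi\cap\ker\chi$) the required condition for kernel $\{e_1\}$ is $e_1e_2=e_1$ and $e_1e_3=e_1$, i.e.\ $a_{12}=a_{13}=\sfrac{1}{2}$; in $\A(-\sfrac{1}{2},-\sfrac{1}{2},0)$ one has $a_{12}=a_{13}=-\sfrac{1}{2}$ and $e_1e_j=e_j$, so only the partition with kernel $\{e_2,e_3\}$ survives and the second weight function is $\lambda_1e_1+\lambda_2e_2+\lambda_3e_3\mapsto \lambda_1$ (the situation you describe occurs instead for $\A(\sfrac{1}{2},\sfrac{1}{2},0)$). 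You have also swapped the roles of $I$ and $J$ relative to Lemma \ref{l1}. Your fallback --- discarding that case because $\omega(\sigma(e_1))=1$ --- is valid, so the conclusion is unaffected, but the parenthetical claim as stated is false.
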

\begin{proof} Write $\A(-\sfrac{1}{2},-\sfrac{1}{2},0)$ by $\A$ for simplicity. Since $e_1$ is the unit element of the algebra $\A$, we have that $\sigma(e_1)=e_1$ for all $\sigma\in \Aut(\A)$. By Lemma \ref{l2}, the set $\Id(\A)$  is equal to  
	$\{e_1,u_a, e_1 - u_a \, |\, a\in F\}$ where  $u_a = ae_2 + (1-a)e_3$.
	On the other hand,  every element $u$ in $\A$ can be uniquely presented  by $u= \alpha e_1 + x$ where $\alpha\in F$ and $x$ is a linear combination of $e_2$ and $e_3$. If $v = \beta e_1 + y$ is another element of $ \A $, then the product $uv$ is given as follows:
	$$uv = (\alpha\beta)e_1 + \Big(\beta+ \frac{\omega(y)}{2}\Big) x+ 
	 \Big(\alpha+ \frac{\omega(x)}{2}\Big) y.$$
	 In particular, $u_a(e_1-u_b) = (\sfrac{1}{2})(u_a-u_b)$ so that $\det(u_a,e_1-u_b, u_a(e_1-u_b))\ne 0$ if $a\ne b$.
	 Now it is easy to check that two different idempotent elements $u,v\in \Id(\A)\setminus \{e_1\}$ satisfy $uv= (u+v)/2$ if and only if either $u=u_a$, $v=u_b$ or  $u= e_1 - u_a$ and $v= e_1 - u_b$ for some $a,b\in F$ with $a\ne b$. This proves the lemma.
\end{proof}

\begin{Le} We have that 
	\begin{eqnarray}
	&&	\label{rer} \A(\sfrac{1}{2},\sfrac{1}{2},-\sfrac{1}{2})\cong \A(\sfrac{1}{2},\sfrac{1}{2},\sfrac{1}{2}) \cong \A(-\sfrac{1}{2},-\sfrac{1}{2},\sfrac{1}{2}),\\
&&	\Aut(\A(\sfrac{1}{2},\sfrac{1}{2},c))= \{\Id\} \quad \text{if}\quad c \not\in \{0,\sfrac{1}{2},-\sfrac{1}{2}\}.\qquad 
	\end{eqnarray}
	
\end{Le}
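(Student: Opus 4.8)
The proof separates into the two displayed claims.

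\textbf{The isomorphisms.} These are read off Table~\ref{table:1}: the permutation $(23)$ sends $\Skew(\sfrac{1}{2},\sfrac{1}{2},-\sfrac{1}{2})$ to $\Skew(b,a,-c)=\Skew(\sfrac{1}{2},\sfrac{1}{2},\sfrac{1}{2})$, and $(123)$ sends $\Skew(\sfrac{1}{2},\sfrac{1}{2},\sfrac{1}{2})$ to $\Skew(-b,-c,a)=\Skew(-\sfrac{1}{2},-\sfrac{1}{2},\sfrac{1}{2})$. Since a Lotka--Volterra algebra with structure matrix $A$ relative to a natural basis is also the Lotka--Volterra algebra with structure matrix $A_\tau$ relative to the permuted natural basis, the corresponding basis relabellings are the desired algebra isomorphisms.

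\textbf{The automorphism group.} Fix $c\notin\{0,\sfrac{1}{2},-\sfrac{1}{2}\}$, write $\A=\A(\sfrac{1}{2},\sfrac{1}{2},c)$ and take $\sigma\in\Aut(\A)$. As $a_{12}a_{13}a_{23}=\frac{1}{4}c\ne 0$ and $a_{13}\ne a_{12}+a_{23}$, the description of $\Id(\A)$ recalled before the lemma shows that $\Id(\A)$ has exactly seven elements: $e_1,e_2,e_3$; the weight-one idempotent $u=e_1-\frac{1}{2c}e_2+\frac{1}{2c}e_3$; and the weight-zero idempotents $v_1=e_2-e_1$, $v_2=e_3-e_1$, $v_3=\frac{1}{2c}(e_3-e_2)$. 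In particular $\Aut(\A)$ is finite and $\sigma$ permutes $\Id(\A)$. The multiplication is governed by $e_1e_j=e_1$ for $j=1,2,3$ and $e_2e_3=(\frac{1}{2}+c)e_2+(\frac{1}{2}-c)e_3$, and from these a short direct computation gives: $e_1x=\omega(x)e_1$ for every $x\in\A$, so $e_1\A=Fe_1$ and $e_1y\in\{0,e_1\}$ for all $y\in\Id(\A)$; $v_3\A=Fv_3$ but $v_3e_2=(\frac{1}{2}-c)v_3\notin\{0,v_3\}$ (here $c\ne\pm\sfrac{1}{2}$ is used); and for each of $e_2,e_3,u,v_1,v_2$ there is a basis vector $e_j$ with $we_j\notin Fw$.

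These facts make $e_1$ intrinsically recognisable. Indeed $\{e_1,v_3\}$ is exactly the set of idempotents $w$ with $\dim(w\A)=1$, and among those $e_1$ is the unique one satisfying additionally $wy\in\{0,w\}$ for all $y\in\Id(\A)$; both properties are preserved by any automorphism, so $\sigma(e_1)=e_1$, whence $\sigma(v_3)=v_3$ and $\sigma(u)=\sigma(e_1+v_3)=u$ (note $e_1+v_3=u$). Next, $e_1e_2=e_1$ gives $e_1\sigma(e_2)=\sigma(e_1)=e_1$, i.e.\ $\omega(\sigma(e_2))e_1=e_1$, so $\sigma(e_2)$ has weight $1$; since $\sigma(e_2)\in\Id(\A)\setminus\{e_1,u,v_3\}$ and the weight-one members of that set are $e_2,e_3$, we get $\sigma(e_2)\in\{e_2,e_3\}$, and likewise $\sigma(e_3)\in\{e_2,e_3\}$. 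Finally, if $\sigma$ interchanged $e_2$ and $e_3$, then commutativity would give $\sigma(e_2e_3)=\sigma(e_2)\sigma(e_3)=e_3e_2=e_2e_3$, whereas applying $\sigma$ to $(\frac{1}{2}+c)e_2+(\frac{1}{2}-c)e_3$ gives $(\frac{1}{2}+c)e_3+(\frac{1}{2}-c)e_2$; comparing the coefficients of $e_2$ forces $c=0$, a contradiction. Hence $\sigma$ fixes $e_1,e_2,e_3$, and being linear it equals $\Id$.

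The one mildly laborious step is the block of elementary computations in the second paragraph—writing down the seven idempotents and checking the "one-dimensional left ideal" behaviour of $e_1$ and $v_3$ (and that the other five idempotents fail it); everything after that is forced and purely combinatorial.
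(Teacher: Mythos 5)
Your proof is correct and follows essentially the same route as the paper: both parts read the isomorphisms off Table~1, list the seven idempotents, characterize $e_1$ as the unique nonzero idempotent $w$ with $wy\in\{0,w\}$ for all $y\in\Id(\A)$, and then force $\sigma(e_2)=e_2$, $\sigma(e_3)=e_3$ from the product $e_2e_3$. The only cosmetic difference is how $e_4$ is excluded as an image of $e_2$ or $e_3$: you first fix $v_3$ (hence $e_4=e_1+v_3$), whereas the paper uses $\det(e_i,e_4,e_ie_4)\ne 0$ for $i=2,3$.
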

\begin{proof} Relation  (\ref{rer}) is clear  from Table 1. Let  $c\ne 0, \pm\sfrac{1}{2}$ and denote $\A(\sfrac{1}{2},\sfrac{1}{2},c)$ by $\mathbf{A}_{c}$. From Lemma \ref{l2}, we know  that this algebra has exactly seven nonzero idempotent elements
	$$\Id(\mathbf{A}_c)= \Big\{e_1,e_2,e_3, e_4,-e_1+e_2,-e_1+e_3, \frac{1}{2c}(-e_2+e_3)\Big\}$$
	where $e_4=e_1+(\sfrac{1}{2c})(-e_2+e_3)$.
Notice that $e_1u= \omega(u) e_1$ for all $u \in \Id(\mathbf{A}_c)$, and  hence  $e_1$ is the unique nonzero idempotent $w$ of $\mathbf{A}_c$ such that $wu \in \{w, 0\}$ for all $u\in \Id(\mathbf{A}_c)$.  Therefore, $e_1$ is fixed for all automorphisms of $\mathbf{A}_c$. Let $\sigma\in\Aut(\A)$. We have that 
$e_1 = \sigma(e_1)= \sigma(e_1 e_i) = \sigma(e_1)\sigma(e_i)= e_1\sigma(e_i)=\omega(\sigma(e_i))e_1$ for $i=2,3$ so that 
$\sigma(e_2),\sigma(e_3) \in \Id_1(\A)\setminus \{e_1\}= \{e_2,e_3,e_4\}$. Now relation $\sigma(e_2)\sigma(e_3)=(\sfrac{1}{2}+c)\sigma(e_2)+ (\sfrac{1}{2}-c)\sigma(e_3)$ forces $\sigma(e_2)=e_2$ and $\sigma(e_3)=e_3$ since $\det(e_2,e_4,e_2e_4)\ne 0$ and $\det(e_3,e_4,e_3e_4)\ne 0$.
 This implies that $\sigma=\Id$.
\end{proof}

\begin{Le} We have that 
	$\Aut(\A(\sfrac{1}{2},\sfrac{1}{2},0))= \{f_{a,b},g_{a,b} \,\, | \,\, a,b\in F, \,\, a\ne b\}$ where 
	$$f_{a,b}(e_1)=e_1, \quad 
	f_{a,b}(e_2)= -e_1 + u_a,\quad 
	f_{a,b}(e_3)= -e_1 +u_b,$$
	$$g_{a,b}(e_1)=e_1, \quad 
	g_{a,b}(e_2)=  u_a,\quad 
	f_{a,b}(e_3)=  u_b.$$
and  $u_a = a e_2+ (1-a)e_3$.	
\end{Le}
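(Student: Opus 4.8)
The plan is to mimic the structure of the proof of the previous lemma for $\A(-\sfrac12,-\sfrac12,0)$, since the algebra $\A(\sfrac12,\sfrac12,0)$ is its ``companion'' under the $S_3$-action of Table 1 (the entry $(12)$ sends $\Skew(-\sfrac12,-\sfrac12,0)$ to $\Skew(\sfrac12,0,-\sfrac12)$, but more directly one checks $\A(\sfrac12,\sfrac12,0)$ is the baric algebra whose natural basis elements $e_2,e_3$ have weight $1$ while $e_1$ is still the unit). First I would record, using Lemma \ref{l2} with $(a,b,c)=(\sfrac12,\sfrac12,0)$, the description of the idempotent set: $e_1$ is the unit, $\Id_1(\A)\setminus\{e_1\}=\{u_a=ae_2+(1-a)e_3 : a\in F\}$ and $\Id_0(\A)=\{-e_1+u_a : a\in F\}$. (Here one must also note $\Skew(\sfrac12,\sfrac12,0)$ has $a_{23}=0$, so the support-$\{2,3\}$ analysis of the preliminaries gives exactly the one-parameter family of weight-$1$ idempotents $\alpha e_2+\beta e_3$ with $\alpha+\beta=1$.)

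Next I would write an arbitrary element as $u=\alpha e_1+x$ with $x\in\gerado\{e_2,e_3\}$ and, exactly as in the $\A(-\sfrac12,-\sfrac12,0)$ case, compute the product formula
$$uv=(\alpha\beta)e_1+\Bigl(\beta-\tfrac{\omega(y)}{2}\Bigr)x+\Bigl(\alpha-\tfrac{\omega(x)}{2}\Bigr)y$$
for $v=\beta e_1+y$ — the only change from the earlier lemma is the sign in front of the $\omega/2$ terms, coming from $a_{12}=a_{13}=\sfrac12$ instead of $-\sfrac12$. Since $e_1$ is the unit, every automorphism $\sigma$ fixes $e_1$, so $\sigma$ is determined by the pair $(\sigma(e_2),\sigma(e_3))$ of distinct idempotents in $\Id(\A)\setminus\{e_1\}$; and a linear map $\sigma$ with $\sigma(e_1)=e_1$, $\sigma(e_2),\sigma(e_3)\in\Id(\A)$ is an algebra homomorphism iff it respects the single nontrivial relation $e_2e_3=e_3$ (together with $e_2^2=e_2$, $e_3^2=e_3$, $e_1e_j=e_j$, which are automatic for idempotents fixed/mapped correctly), i.e.\ iff $\sigma(e_2)\sigma(e_3)=\sigma(e_3)$; invertibility is the condition $\det(\sigma(e_2),\sigma(e_3),\sigma(e_2)\sigma(e_3))\ne0$, equivalently that $\{e_1,\sigma(e_2),\sigma(e_3)\}$ be a basis.

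So the core computation is: among distinct $u,v\in\Id(\A)\setminus\{e_1\}$, classify the pairs with $uv=v$. Using the product formula above one checks, casework over whether each of $u,v$ lies in $\Id_1$ or in $\Id_0$: if $u=u_a$, $v=u_b$ (both weight $1$) then $\omega(x)=\omega(y)=1$ gives $uv=-\tfrac12 e_1+\tfrac12 u_a+\tfrac12 u_b\ne v$ unless — wait, this forces the weight-$0$ mixed case; one finds the solutions are exactly $u=-e_1+u_a,\ v=-e_1+u_b$ (this yields $f_{a,b}$) and $u=u_a,\ v=u_b$ with $a\neq b$ (this yields $g_{a,b}$), mirroring the earlier lemma. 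The mixed pairs ($u\in\Id_0$, $v\in\Id_1$ or vice versa) fail the relation, as does any pair with $a=b$ since then $u=v$. Finally I would verify that each $f_{a,b}$ and $g_{a,b}$ so obtained is genuinely invertible (the determinant condition reduces to $a\ne b$, already assumed) and that these are all distinct, giving the stated equality.

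The main obstacle is purely bookkeeping: getting the signs right in the product formula (the $+\sfrac12$ versus $-\sfrac12$ off-diagonal entries flip a sign relative to the $\A(-\sfrac12,-\sfrac12,0)$ lemma) and then patiently running the four cases for $uv=v$ without conflating $\Id_0$ and $\Id_1$ elements. There is no conceptual difficulty; the one genuine check worth stating explicitly is that the two families $\{f_{a,b}\}$ and $\{g_{a,b}\}$ exhaust the solution set of $uv=v$ rather than just being contained in it, which is where one uses that $\Id(\A)\setminus\{e_1\}$ is precisely the disjoint union of the two one-parameter families from Lemma \ref{l2}.
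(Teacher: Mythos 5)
Your proposal transplants the $\A(-\sfrac{1}{2},-\sfrac{1}{2},0)$ argument, but the transplant fails at its foundation: in $\A(\sfrac{1}{2},\sfrac{1}{2},0)$ the element $e_1$ is \emph{not} the unit. Since $a_{12}=a_{13}=\sfrac{1}{2}$ one has $e_1e_2=e_1e_3=e_1$, hence $e_1x=\omega(x)e_1$ for every $x$; $e_1$ is an absorbing idempotent, and the paper pins it down instead as the unique nonzero idempotent $w$ with $w\cdot\Id(\A)=\{0,w\}$ (exactly as in the earlier lemma on $\A(\sfrac{1}{2},\sfrac{1}{2},c)$). This error propagates. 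Your product formula is wrong: the terms $\alpha e_1y$ and $\beta e_1x$ contribute $\alpha\omega(y)e_1$ and $\beta\omega(x)e_1$ to the $e_1$-coordinate rather than $\alpha y+\beta x$ to the $\{e_2,e_3\}$-part, so the correct formula is $uv=(\alpha\beta+\beta\omega(x)+\alpha\omega(y))e_1+\tfrac{1}{2}(\omega(y)x+\omega(x)y)$ --- not a sign-flip of the previous lemma's formula. The relation you propose to test is also wrong: since $a_{23}=0$ the constraint is $\sigma(e_2)\sigma(e_3)=\tfrac{1}{2}\bigl(\sigma(e_2)+\sigma(e_3)\bigr)$, not $\sigma(e_2)\sigma(e_3)=\sigma(e_3)$. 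With your relation $uv=v$ the pairs $(u_a,u_b)$ with $a\ne b$ fail (one gets $(u_a+u_b)/2\ne u_b$), so the casework you sketch cannot actually terminate in the stated answer; the step ``one finds the solutions are exactly\ldots'' is not a computation you could complete as written.

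Moreover, the step you dismiss as ``automatic'' --- compatibility with the products $e_1e_j$ --- is a genuine constraint, and it is where the real content sits. Since $e_1e_j=e_1$ and $e_1w=\omega(w)e_1$, any $\sigma$ fixing $e_1$ must satisfy $\omega(\sigma(e_j))e_1=\sigma(e_1e_j)=e_1$, forcing $\sigma(e_2),\sigma(e_3)\in\Id_1(\A)$. This rules out the maps $f_{a,b}$: indeed $f_{a,b}(e_1)f_{a,b}(e_2)=e_1(-e_1+u_a)=-e_1+e_1=0\ne e_1=f_{a,b}(e_1e_2)$, so $f_{a,b}$ is not even a homomorphism. (The paper's own proof also omits this check, and the statement as printed appears to require correction to $\Aut(\A(\sfrac{1}{2},\sfrac{1}{2},0))=\{g_{a,b}\mid a,b\in F,\ a\ne b\}$.) In any case, an argument that declares the $e_1$-relations automatic and works from the wrong multiplication table cannot establish the lemma.
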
 
\begin{proof}	Let $\A=\A(\sfrac{1}{2},\sfrac{1}{2},0)$. By Lemma \ref{l2} we know that 
	$$\Id(\A)=\{e_1,u_a,-e_1+ u_a \,\, | \,\, a\in F\,\, \}.$$
	Every element $u\in\A$ can be uniquely presented  by $u= \alpha e_1 + x$ where $\alpha\in F$ and $x$ is a linear combination of $e_2$ and $e_3$. If $v=\beta e_1+y$ is another element of  $\A$,  then  
$$uv =(\alpha\beta +\beta\omega(x)+\alpha\omega(y))e_1 + \frac{\omega(y) x+\omega(x)y}{2}.$$
In particular, $e_1u_a=e_1$, $e_1(-e_1+u_a)=0$, $u_au_b=(u_a+u_b)/2$, $u_a(-e_1+u_b) = -e_1 + (u_a+ u_b)/2$ and $(-e_1+u_a)(-e_1+ u_b) = -e_1 + (u_a+u_b)/2$.
Therefore,   $e_1$ is the unique nonzero idempotent element $w$ of $\A$ such that $w\cdot\Id(\A)= \{0,w\}$ and hence $\sigma(e_1)=e_1$ for all $\sigma \in \Aut(\A)$. Furthermore, two different idempotents $u,v\in \Id(\A)\setminus \{e_1\}$ satisfy 
$uv=(u+v)/2$ if and only if either $u=u_a$ and $v=u_b$ or $u=-e_1+u_a$ and $v=-e_1+u_b$ for some $a,b\in \A$. This proves the lemma.
\end{proof}
\begin{teo}
	Let $\A$ be a Lotka-Volterra algebra with dimension 3 such that there exists   $\sigma \in \Aut(\A)$ where  $\sigma(H)\ne H$. Then $\A$ is isomorphic to one of the following  algebras $\A(-\sfrac{1}{2},-\sfrac{1}{2},c)$, $c\in F$, or  $\A(\sfrac{1}{2},\sfrac{1}{2},0)$. Furthermore, if $F$ is an infinite field and $\Aut(\A)$ is a finite group, then $\A\cong \A(-\sfrac{1}{2},-\sfrac{1}{2},c)$  for some particular $c\in F^*$.
\end{teo}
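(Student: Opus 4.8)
The plan is to read off both statements from the sequence of lemmas on three-dimensional Lotka--Volterra algebras established above, so the argument is mostly a matter of organizing those lemmas. For the first assertion, I would start from the structural lemma which, under the hypothesis that some $\sigma\in\Aut(\A)$ satisfies $\sigma(H)\ne H$, already gives $\A\cong\A(-\sfrac{1}{2},-\sfrac{1}{2},c)$ or $\A\cong\A(\sfrac{1}{2},\sfrac{1}{2},c)$ for some $c\in F$. It then remains to cut down the second family. If $c\notin\{0,\sfrac{1}{2},-\sfrac{1}{2}\}$ the lemma computing $\Aut(\A(\sfrac{1}{2},\sfrac{1}{2},c))$ gives $\{\Id\}$, so no automorphism moves $H$ and this case cannot occur; if $c=\pm\sfrac{1}{2}$, the isomorphisms (\ref{rer}) fold $\A(\sfrac{1}{2},\sfrac{1}{2},c)$ into $\A(-\sfrac{1}{2},-\sfrac{1}{2},\sfrac{1}{2})$, which already belongs to the first family. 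The remaining value $c=0$ does survive, and to justify keeping it I would point to the explicit description of $\Aut(\A(\sfrac{1}{2},\sfrac{1}{2},0))$: the automorphism $f_{a,b}$ carries $e_2$ to $-e_1+u_a$, an element of weight $0$, whence $f_{a,b}(H)\ne H$. This yields the first claim.

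For the second assertion I would assume $F$ infinite and $\Aut(\A)$ finite and simply walk down the list just produced. Both $\A(\sfrac{1}{2},\sfrac{1}{2},0)$ and $\A(-\sfrac{1}{2},-\sfrac{1}{2},0)$ have automorphism group $\{f_{a,b},g_{a,b}\mid a,b\in F,\ a\ne b\}$ by the corresponding lemmas, which is infinite as soon as $F$ is; the finiteness hypothesis therefore rules both out, in particular forcing $c\ne0$. What is left is $\A(-\sfrac{1}{2},-\sfrac{1}{2},c)$ with $c\ne0$, and here the automorphism group is finite in every case: it is $\{\Id,\eta\}$ when $c\notin\{0,\sfrac{1}{2},-\sfrac{1}{2}\}$, and, using (\ref{rer}) to pass to $\A(-\sfrac{1}{2},-\sfrac{1}{2},-\sfrac{1}{2})$, it is isomorphic to $S_3$ when $c=\pm\sfrac{1}{2}$. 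Hence $\A\cong\A(-\sfrac{1}{2},-\sfrac{1}{2},c)$ with $c\in F^{*}$, which is exactly the assertion (the word ``particular'' meaning that $c$ is, and must be, nonzero).

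Since everything reduces to bookkeeping over earlier lemmas, there is no genuine obstacle; the two points that need a little care are (i) verifying that $\A(\sfrac{1}{2},\sfrac{1}{2},0)$ really does admit an automorphism with $\sigma(H)\ne H$, so that it legitimately appears in the list of the first assertion instead of being absorbed into the $\A(-\sfrac{1}{2},-\sfrac{1}{2},\cdot)$ family, and (ii) treating the boundary values $c=\pm\sfrac{1}{2}$ coherently through (\ref{rer}), so that folding $\A(\sfrac{1}{2},\sfrac{1}{2},\pm\sfrac{1}{2})$ into the first family neither introduces a spurious new isomorphism class nor changes the finiteness of the automorphism group. One should also keep the standing hypotheses (characteristic $\ne2$, and $|F|\ge4$ where the three-element idempotent computations were invoked) in sight, since divisions by $2$ and by $2c$ pervade the lemmas being quoted.
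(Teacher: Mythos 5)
Your proposal is correct and follows exactly the route the paper intends: the theorem is stated there without an explicit proof, as a summary of the preceding lemmas of Section 4, and your assembly of those lemmas (the structural lemma giving the two families $\A(-\sfrac{1}{2},-\sfrac{1}{2},c)$ and $\A(\sfrac{1}{2},\sfrac{1}{2},c)$, the triviality of $\Aut(\A(\sfrac{1}{2},\sfrac{1}{2},c))$ for $c\ne 0,\pm\sfrac{1}{2}$, the folding of $c=\pm\sfrac{1}{2}$ via (\ref{rer}), and the infinitude of $\Aut$ in the two $c=0$ cases versus its finiteness for $c\in F^*$) is precisely the intended argument. Your two cautionary points (checking that $\A(\sfrac{1}{2},\sfrac{1}{2},0)$ genuinely admits an automorphism moving $H$, and the standing hypotheses on $F$) are sensible but do not change the substance.
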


In the rest of this section we shall assume that $\A$ is a  Lotka-Volterra algebra with dimension 3   associated to the matrix $A=(a_{ij})$ with respect to the natural basis $\Phi=\{e_1,e_2,e_3\}$ such that $\A$ is not isomorphic to either $\A(-\sfrac{1}{2},-\sfrac{1}{2},c)$,  for any $c\in F$, or  $\A(\sfrac{1}{2},\sfrac{1}{2},0)$. Thus, every $\sigma \in \Aut(\A)$ satisfies $\sigma (H)=H$ and hence $\sigma(\Id_1(\A))= \Id_1(\A)$. Furthermore, we will suppose that $A\ne (0)$ and  $\Aut(\A) \ne \{\Id\}$. As usual in this paper, $u_a = ae_2 + (1-a)e_3$ and $u_{ij}=e_i-e_j$ for all $a\in F$ and $i,j\in\{1,2,3\}$.

\begin{Le}
 Let  $a_{13}= a_{12}+a_{23}$. Then  we have one of the following two possibilities:
 \begin{enumerate}[(i)]
 	\item Characteristic of $F$ is equal to 3 and $\A\cong \A(r,-r,r)$ for some particular  $r\in F^*$. In this case,  $\Aut(\A)= \{\Id, \varrho,\varrho^2\}$ where $\varrho(e_1)=e_{2}$, $\varrho(e_2)=e_{3}$  and $\varrho(e_3)=e_1$. 
 	\item  $\A\cong \A(r,r,0)$, for some particular  $r\ne 0, \pm 1/2$.
 	Furthermore, $$\Aut( \A(r,r,0))= \{g_{a,b}\,\, |\,\, a,b\in F,\,\, a\ne b\}$$ where $g_{a,b}(e_1)= e_1$,  $g_{a,b}(e_2)=u_a$ and   $g_{a,b}(e_3)=u_b$.
 \end{enumerate}
\end{Le}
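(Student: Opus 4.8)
The plan is to treat the two cases $a_{12}a_{13}a_{23}\ne 0$ and $a_{12}a_{13}a_{23}=0$ separately, using Table~1 to relabel the natural basis whenever convenient; note that such a relabelling does not change the algebra (only its presentation) and that it preserves the hypothesis $a_{13}=a_{12}+a_{23}$, as one checks directly on the generating transpositions $(12),(23)$ of $S_3$. Two preliminary remarks will be used throughout. First, by the standing hypothesis of this section $\A$ is isomorphic to none of $\A(-\sfrac{1}{2},-\sfrac{1}{2},c)$ and $\A(\sfrac{1}{2},\sfrac{1}{2},0)$, so every $\sigma\in\Aut(\A)$ satisfies $\sigma(H)=H$ and hence $\sigma(\Id_1(\A))=\Id_1(\A)$. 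Second, since $a_{13}=a_{12}+a_{23}$, Lemma~\ref{l2}(ii)--(i), applied after a relabelling that makes the $(1,2)$-entry nonzero (possible as $A\ne(0)$), shows that every idempotent whose support is all of $\{1,2,3\}$ has weight $0$; combined with the description of the idempotents of support $1$ and $2$ recalled just before Lemma~\ref{l2}, this gives that $\Id_1(\A)$ consists exactly of $e_1,e_2,e_3$ together with, for each pair $\{i,j\}$ with $a_{ij}=0$, the whole line $\{\alpha e_i+\beta e_j:\alpha+\beta=1\}$.

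Assume first $a_{12}a_{13}a_{23}\ne 0$. Then no pair $\{i,j\}$ satisfies $a_{ij}=0$, so $\Id_1(\A)=\{e_1,e_2,e_3\}$, and therefore every $\sigma\in\Aut(\A)$ permutes $\{e_1,e_2,e_3\}$, say $\sigma(e_i)=e_{\tau(i)}$. A one-line computation shows such a linear map is an algebra automorphism exactly when $a_{ij}=a_{\tau(i)\tau(j)}$ for all $i,j$, i.e. when $A_\tau=A$. Reading Table~1 under $a_{12}a_{13}a_{23}\ne 0$: each of the three transpositions forces one of $a_{12},a_{13},a_{23}$ to vanish and is therefore impossible, whereas each $3$-cycle forces $a_{12}=a_{23}$ and $a_{13}=-a_{12}$; together with $a_{13}=a_{12}+a_{23}$ this yields $3a_{12}=0$, so, as $a_{12}\ne0$, the characteristic of $F$ is $3$ and $\A=\A(r,-r,r)$ with $r=a_{12}\in F^*$. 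Since $\Aut(\A)\ne\{\Id\}$, this is the situation that actually occurs, and then --- the two $3$-cycles yielding automorphisms and the transpositions not --- $\Aut(\A)=\{\Id,\varrho,\varrho^2\}$ with $\varrho(e_1)=e_2$, $\varrho(e_2)=e_3$, $\varrho(e_3)=e_1$. This is alternative (i).

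Assume now $a_{12}a_{13}a_{23}=0$. Exactly one of the off-diagonal entries vanishes (two vanishing would force $A=(0)$, via $a_{13}=a_{12}+a_{23}$), and in each of the three sub-cases Table~1 furnishes a relabelling of the natural basis after which the associated matrix is $\Skew(r,r,0)$ for some $r\ne 0$; moreover $r\ne\pm\sfrac{1}{2}$, since otherwise $\A$ would be one of the two excluded algebras. Hence $\A\cong\A(r,r,0)$ with $r\ne 0,\pm\sfrac{1}{2}$, which is the structural part of (ii). To finish I compute $\Aut\big(\A(r,r,0)\big)$ in the natural basis with matrix $\Skew(r,r,0)$: here $\Id_1(\A)=\{e_1\}\cup L$, where $L=\{u_a:a\in F\}$, and a direct computation gives $e_1u_a=(\sfrac{1}{2}+r)e_1+(\sfrac{1}{2}-r)u_a$ and $u_au_b=\sfrac{1}{2}(u_a+u_b)$. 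Let $\sigma\in\Aut(\A)$; it maps $\{e_1\}\cup L$ bijectively onto itself and, being linear and injective, maps the affine line $L$ onto an affine line. If $\sigma(e_1)\in L$, then $\sigma(L)=\{e_1\}\cup\big(L\setminus\{\sigma(e_1)\}\big)$ would be an affine line sharing at least two distinct points with $L$ (recall $|F|\ge 4$), hence equal to $L$, forcing $e_1\in L$, which is false since $e_1\notin\gerado\{e_2,e_3\}\supseteq L$. Therefore $\sigma(e_1)=e_1$, so $\sigma(L)=L$ and $\sigma(e_2)=u_a$, $\sigma(e_3)=u_b$ for some $a\ne b$ (by injectivity), that is $\sigma=g_{a,b}$. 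Conversely, for $a\ne b$ the map $g_{a,b}$ is invertible (its determinant in the basis $\{e_1,e_2,e_3\}$ is $a-b$) and preserves the three products displayed above, hence belongs to $\Aut(\A)$. This gives alternative (ii).

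The step I expect to be delicate is the proof that $\sigma(e_1)=e_1$ in the second case, carried out above by the affine-line argument; it relies on $F$ having at least four elements (which is in force, since Lemma~\ref{l2} is invoked), and an intrinsic multiplicative characterisation of $e_1$ inside $\Id(\A)$ seems less transparent to me here than in the previously treated algebras $\A(\pm\sfrac12,\pm\sfrac12,0)$. The verification against Table~1 in the case $a_{12}a_{13}a_{23}\ne 0$ is routine but must be done with care in order to pin down that the characteristic is forced to equal $3$.
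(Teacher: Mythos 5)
Your proof is correct and follows essentially the same route as the paper's: the same case split on whether all off-diagonal entries of $A$ are nonzero, the same use of Table 1 to show that a nontrivial automorphism must act as a $3$-cycle and hence force $a_{12}=a_{23}=-a_{13}$ and characteristic $3$ in case (i), and the same identification $\Id_1(\A)=\{e_1\}\cup\{u_a \mid a\in F\}$ together with $u_au_b=(u_a+u_b)/2$ in case (ii). The only local divergence is how you obtain $\sigma(e_1)=e_1$ in case (ii): the paper characterizes $e_1$ multiplicatively as the unique $w\in\Id_1(\A)$ satisfying $wu=(\sfrac{1}{2}+r)w+(\sfrac{1}{2}-r)u$ for all $u\in\Id_1(\A)$ (which works precisely because $r\ne 0$), whereas you use an affine-line argument; both are valid.
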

\begin{proof}
 Renumbering the indices if necessary, we can assume that $a_{12}a_{13}\ne 0$. 
 Firstly, we will assume that  $a_{23}\ne 0$. By Lemma \ref{l2} we get  $\Id_1(\A)=\{e_1,e_2,e_3\}$. Take $\Id \ne \sigma \in \Aut(\A)$. Then $\sigma$ induces a permutation $\check{\sigma}\ne \Id$ over the set
	  $\{1,2,3\}$ where $\sigma(e_k) = e_{\check{\sigma}(k)}$ for $k=1,2,3$. Therefore, $A_{\check{\sigma}}=A$. Now, Table 1 implies that $\check{\sigma}\in \{(123), (132)\}$ and  $a_{12}=a_{23}=-a_{13}$. Because $a_{13}=a_{12}+a_{23}$, we have $3a_{12}=0$ and hence $A$ has characteristic 3.
	  
	  Let now $a_{23}=0$. This forces  $a_{12}=a_{13}$ and by hypothesis $a_{12} \ne 0, \pm\sfrac{1}{2}$. We already know that $\Id_1(\A)= \{e_1, u_a \,\, | \,\, a\in F\}$. Besides,  $e_1$ is the unique idempotent $w$ in $\Id_1(\A)$ satisfying $wu= (\sfrac{1}{2}+a_{12})w + (\sfrac{1}{2}-a_{12})u$ for all $u\in \Id_1(\A)$. This forces $\sigma(e_1)=e_1$. Finally, we observe that $u_au_b = (u_a+u_b)/2$ for all $a,b\in F$. This completes the proof of the lemma.
\end{proof}
Now we will consider the case  $a_{13}\ne a_{12}+a_{23}$. By Table 1, we have that  $\A(a_{12},a_{13},a_{23})$ is isomorphic to  one of the following algebras: (i) $\A(a,b,c)$ where $abc\ne 0$ and $b\ne a+c$; (ii)   $\A(a,b,0)$ where $b\ne a$; (iii) $\A(a,0,0)$ where $a\ne 0$. 
 Every case will be analyzed separately.
\begin{Le}
	 If $a_{13}\ne a_{12}+a_{23}$ and $a_{13} a_{12}a_{23}\ne 0$, then $\A$ is isomorphic to one of the following algebras:
	 \begin{enumerate}[(i)]
	 	\item $\A(r,-r,r)$ with $r\ne 0$ and characteristic of $F$ different from 3; 
	 	\item $\A(r,r,r)$ with $r\ne 0,\pm \sfrac{1}{2}$.
	 \end{enumerate}
 Furthermore, $\Aut(\A(r,-r,r))=\{\Id,\varrho,\varrho^2\}$ and  $\Aut(\A(r,r,r))=\{\Id,\sigma_2\}$ where $\varrho(e_1)= e_{2}$, $\varrho(e_2)=e_3$ and $\varrho(e_3)=e_1$.
\end{Le}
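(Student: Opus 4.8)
The plan is to use the structural reductions already established in the paper: the hypothesis $a_{13}\neq a_{12}+a_{23}$ together with $a_{13}a_{12}a_{23}\neq 0$ puts us in case (i) of the trichotomy derived from Table~1, namely $\A\cong\A(a,b,c)$ with $abc\neq 0$ and $b\neq a+c$. For such an algebra Lemma~\ref{l2}(i) gives $\Id_1(\A)=\{e_1,e_2,e_3,e_4\}$ where $e_4=\frac{1}{a-b+c}(ce_1-be_2+ae_3)$ is the unique idempotent of weight~$1$ with support $\{1,2,3\}$. Since every $\sigma\in\Aut(\A)$ satisfies $\sigma(H)=H$ (we are outside the excluded isomorphism classes), $\sigma$ permutes $\{e_1,e_2,e_3,e_4\}$, and since $e_4$ is the only element of this set with full support while $\sigma$ takes basis vectors to basis vectors or to $e_4$, one checks that $\sigma$ either fixes $e_4$ (and then permutes $\{e_1,e_2,e_3\}$) or swaps $e_4$ with one of $e_1,e_2,e_3$. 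I would first rule out or characterize each possibility by tracking supports of products $e_ie_j$, exactly as in the dimension-3 lemmas above.

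Next I would carry out the core case: $\sigma$ induces a permutation $\check\sigma$ of $\{1,2,3\}$ with $\sigma(e_k)=e_{\check\sigma(k)}$, which forces $A_{\check\sigma}=A$. Reading off Table~1, the only nontrivial $\tau$ with $A_\tau=A=\Skew(a,b,c)$ occur for $\tau\in\{(12),(13),(23),(123),(132)\}$ with the corresponding coefficient identities: $(123)$ or $(132)$ forces $a=b=-c$ hence (after the isomorphism $\A(a,b,c)\cong\A(a,b,-c)$ or rescaling the natural basis) $\A\cong\A(r,-r,r)$; the transpositions force relations like $a=-a$ or $a=b,\ c=-c$, which combined with $abc\neq0$ and the exclusion of $\pm\sfrac12$ and of $b=a+c$ collapse to $\A\cong\A(r,r,r)$. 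The case where $\sigma$ exchanges $e_4$ with some $e_i$ I expect to reduce, via the support-of-product argument, to the same two families (or to be impossible), because the multiplication constants relating $e_4$ to the $e_j$ are rigid. Once the isomorphism type is pinned down, computing $\Aut$ is direct: for $\A(r,-r,r)$ the map $\varrho$ is checked to be an automorphism (it realizes $\tau=(123)$, and $A_{(123)}=\Skew(-b,-c,a)=\Skew(r,-r,r)$ up to the allowed sign change), $\varrho^3=\Id$, and no transposition survives, so $\Aut=\{\Id,\varrho,\varrho^2\}$; for $\A(r,r,r)$ the map $\sigma_2$ of Lemma~\ref{laut3} is an automorphism, $\sigma_2^2=\Id$, and $\varrho$ fails because $a\neq-c$, giving $\Aut=\{\Id,\sigma_2\}$.

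The main obstacle, I expect, is the bookkeeping in the subcase where $\sigma$ does \emph{not} fix $e_4$: one must express $\sigma(e_i)$ in the basis, use that products $e_ie_j$ have support $\{i,j\}$ (since $a_{ij}\neq0$) whereas products involving $e_4$ generically have larger support, and derive a contradiction or an extra symmetry relation among $a,b,c$. This is the analogue of the ``$e_k e_4$ has support $\{2,3\}$'' arguments in the earlier lemmas, but with three genuinely distinct off-diagonal entries it requires care to see that no new automorphism arises beyond those already accounted for. The rest — extracting the coefficient identities from Table~1 and verifying that $\varrho$ and $\sigma_2$ are automorphisms with the stated orders — is routine and parallels Lemmas~\ref{laut3} and the dimension-3 computations above.
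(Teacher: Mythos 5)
Your overall skeleton does coincide with the paper's: identify $\Id_1(\A)=\{e_1,e_2,e_3,e_4\}$ via Lemma \ref{l2}, split according to whether $e_4$ is fixed by every automorphism, read the fixed case off Table 1, and treat the remaining case by a linear-dependence argument on products. But two of your steps need repair. Your reading of Table 1 is off: $A_{(123)}=A$ gives $a=c=-b$, not $a=b=-c$; the latter relation lands you in $\A(r,r,-r)\cong\A(r,r,r)$, the wrong family, whereas the former is exactly $\A(r,-r,r)$. More importantly, the transpositions do not ``collapse to $\A(r,r,r)$'': $A_{(12)}=A$ forces $a=0$, $A_{(13)}=A$ forces $b=0$, and $A_{(23)}=A$ forces $c=0$, all excluded by $abc\ne 0$, so they are simply impossible. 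Hence the case ``$e_4$ fixed by all automorphisms'' produces only family (i). (The characteristic-$3$ exclusion there is automatic: $a=c=-b$ in characteristic $3$ would give $b=a+c$, contrary to hypothesis.)

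The genuine gap is that the family $\A(r,r,r)$ arises precisely from the case you leave as ``I expect to reduce \dots or to be impossible,'' namely when some $\sigma$ moves $e_4$. That case is neither impossible nor subsumed by the Table 1 analysis, and it must be carried out. The paper's argument is short: after renumbering one may assume $\sigma(\{e_1,e_2,e_3\})=\{e_1,e_3,e_4\}$, so the product of any two elements of $\{e_1,e_3,e_4\}$ is the image of some $e_ie_j$ and therefore lies in the span of the two factors; then
$$0=\det\nolimits_{\Phi}(e_1,e_4,e_1e_4)= -\frac{a_{12}a_{13}(a_{12}-a_{13})}{(a_{12}-a_{13}+a_{23})^2}$$
forces $a_{12}=a_{13}$, and the analogous computation with $e_3e_4$ gives $a_{23}=a_{13}$, whence $a_{12}=a_{13}=a_{23}=r$ with $r\ne 0,\pm\sfrac{1}{2}$. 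Note also that your determination of $\Aut(\A(r,r,r))$ only rules out basis permutations and the single map $\varrho$; to bound the whole group you should invoke Lemma \ref{laut2} (which pins $\sigma(e_1)=e_1$ and $\sigma(e_3)=e_3$ once $\sigma(H)=H$), after which $\sigma(e_2)\in\{e_2,e_4\}$ and Lemma \ref{laut3} yields $\Aut(\A(r,r,r))=\{\Id,\sigma_2\}$.
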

\begin{proof} By Lemma \ref{l2} we have that $\Id_1(\A)=\{e_1,e_2,e_3,e_4\}$ where  $$e_4=\frac{1}{a_{12}-a_{13}+a_{23}} \big( a_{23}e_1 -a_{13}e_2 +a_{12}e_3\big).$$ 
By  hypothesis $\sigma(\Id_1(\A)) = \Id_1(\A)$.  If $e_4$ is fixed by all  $\sigma \in \Aut(\A)$, then    Table 1 forces $a_{12}=a_{23}=-a_{13}$ and 
$\Aut(\A)= \{\Id, \varrho, \varrho^2\}$.
Assume now that there exists $\sigma \in\Aut(\A)$ such that $\sigma(e_4)\ne e_4$. Renumbering the indices if it is necessary, we can assume that
$\sigma(\{e_1,e_2,e_3\})=\{e_1,e_3,e_4\}$. This implies that 
$e_1e_4$ is a linear combination of $e_1$ and $e_4$. Therefore,
$$0=\text{det}_{\Phi}(e_1,e_4,e_1e_4)= -\frac{a_{12}a_{13}(a_{12}-a_{13})}{(a_{12}-a_{13}+a_{23})^2}$$
so that 
 $a_{12}=a_{13}$. Analogously,  we have that  $e_3e_4$ is a linear combination of $e_3$ and $e_4$ and this implies $a_{23}=a_{13}$. Thus, we have that 
$$a_{12}=a_{13}=a_{23}.$$
Furthermore, because $\sigma(H)=H$ for all  $\sigma\in \Aut(\A)$ we get $a_{12}\ne \pm \sfrac{1}{2}$. Using Lemma \ref{laut2} and Lemma \ref{laut3}, we immediately get that  $\Aut(\A)=\{\Id,\sigma_2\}$. This completes the proof  of the lemma.
\end{proof}

\begin{Le}
	If $a_{12}\ne a_{13}$, $a_{23}=0$, then  $a_{12} a_{13}= 0$.
\end{Le}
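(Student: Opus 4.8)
The plan is to argue by contradiction. Suppose $a_{12}a_{13}\ne 0$, and write $p:=a_{12}$, $q:=a_{13}$, so $p,q\in F^{*}$ and, by hypothesis, $p\ne q$, while $a_{23}=0$. I will show that these conditions force $\Aut(\A)=\{\Id\}$, contradicting the standing assumption $\Aut(\A)\ne\{\Id\}$; hence $a_{12}a_{13}=0$.

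First I would pin down $\Id_0(\A)$. Using relation (\ref{id0}), a short inspection of the possible supports shows that none of $\{1\},\{2\},\{3\},\{2,3\}$ can carry a weight-$0$ idempotent, and that the full support $\{1,2,3\}$ would force $-\sfrac{1}{2p}=-\sfrac{1}{2q}$, i.e.\ $p=q$; hence
\[
\Id_0(\A)=\{w_1,w_2\},\qquad w_1=\frac{1}{2p}(e_2-e_1),\qquad w_2=\frac{1}{2q}(e_3-e_1),
\]
and $\{e_1,w_1,w_2\}$ is a basis of $\A$ (as $w_1,w_2$ are nonzero multiples of $e_2-e_1$ and $e_3-e_1$) with $N=\ker\omega=\gerado\{w_1,w_2\}$. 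A direct computation from $e_ie_j=(\sfrac{1}{2}+a_{ij})e_i+(\sfrac{1}{2}+a_{ji})e_j$ (using $a_{23}=0$) gives the products needed below:
\[
e_1w_1=\Big(\tfrac12-p\Big)w_1,\qquad e_1w_2=\Big(\tfrac12-q\Big)w_2,
\]
\[
w_1^2=w_1,\qquad w_2^2=w_2,\qquad w_1w_2=\tfrac{p}{2q}\,w_1+\tfrac{q}{2p}\,w_2 .
\]

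Now take any $\sigma\in\Aut(\A)$. By the standing hypothesis $\sigma(H)=H$, so $\sigma(N)=N$, $\sigma$ permutes the two-element set $\Id_0(\A)$, and $\omega(\sigma(e_1))=1$; thus $\sigma(e_1)=e_1+n$ with $n=\beta w_1+\gamma w_2\in N$. If $\sigma(w_1)=w_1$ and $\sigma(w_2)=w_2$, then applying $\sigma$ to $e_1w_1=(\sfrac12-p)w_1$ gives $(e_1+n)w_1=(\sfrac12-p)w_1$, hence $n w_1=0$; expanding $n w_1$ in the basis $\{w_1,w_2\}$ via the products above forces $\gamma=0$ and then $\beta=0$, so $\sigma$ fixes the basis $\{e_1,w_1,w_2\}$ and $\sigma=\Id$. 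If instead $\sigma$ swaps $w_1$ and $w_2$, then applying $\sigma$ to $e_1w_1=(\sfrac12-p)w_1$ and to $e_1w_2=(\sfrac12-q)w_2$ yields $n w_1=(p-q)w_1$ and $n w_2=(q-p)w_2$; the first equation forces $\gamma=0$ and $\beta=p-q$, and substituting into the second gives $\tfrac{(p-q)p}{2q}=0$, so $p=q$ — impossible. Hence $\Aut(\A)=\{\Id\}$, the desired contradiction, and therefore $a_{12}a_{13}=0$.

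The computations involved are all elementary and short; the only points requiring care are the bookkeeping for the product $w_1w_2$ on $N$ and verifying that the ``swap'' case is genuinely excluded rather than merely constrained — which is precisely where the hypothesis $a_{12}\ne a_{13}$ does its work a second time. I do not anticipate a real obstacle.
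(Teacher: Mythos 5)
Your proof is correct, and it takes a genuinely different route from the paper's. The paper also argues by contradiction through showing $\Aut(\A)=\{\Id\}$, but it works with the weight\emph{-one} idempotents: under $a_{12}a_{13}\ne 0$ it identifies $\Id_1(\A)=\{e_1,u_a \,|\, a\in F\}$ with $u_a=ae_2+(1-a)e_3$, computes $\det_\Phi(e_1,u_a,e_1u_a)=a(1-a)(a_{12}-a_{13})$ to force $\sigma(\{e_1,e_2,e_3\})=\{e_1,e_2,e_3\}$, and then concludes $\sigma=\Id$ (this last step is left implicit; it rests on the $S_3$-action of Table 1, where $a_{12}\ne a_{13}$ rules out the transposition $(23)$ and $a_{12}a_{13}\ne 0$, $a_{23}=0$ rule out the rest). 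You instead exploit the weight\emph{-zero} idempotents: the hypothesis $a_{12}\ne a_{13}$ collapses $\Id_0(\A)$ to the two elements $w_1,w_2$, and the three product formulas $e_1w_k$, $w_k^2$, $w_1w_2$ in the basis $\{e_1,w_1,w_2\}$ let you dispatch both the fixing and the swapping case by pure bookkeeping (I checked $w_1w_2=\tfrac{p}{2q}w_1+\tfrac{q}{2p}w_2$ and the two case computations; they are right). What each buys: the paper's argument is shorter on the page but delegates the final step to Table 1 and must handle an infinite family $\{u_a\}$; yours is entirely self-contained, works with a finite (two-element) invariant set, and makes visible exactly where $a_{12}\ne a_{13}$ is used twice --- once to shrink $\Id_0(\A)$ and once to kill the swap. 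Both correctly rely on the standing assumptions $\sigma(H)=H$ and $\Aut(\A)\ne\{\Id\}$ stated before this block of lemmas.
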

\begin{proof} Suppose for the sake of contradiction that it is not true that $a_{12} a_{13}= 0$. Then $a_{12} a_{13}\ne 0$ and $\Id_1(\A) = \{ e_1, u_a :  a\in F\}$ where $u_a= a e_2 + (1-a)e_3$. Since 
	$$\text{det}_\Phi(e_1,u_a,e_1u_a)= a(1-a)(a_{12}-a_{13})$$
we can affirm that $\sigma\big(\{e_1,e_2,e_3\}\big)=\{e_1,e_2,e_3\}$ for all $\sigma\in \Aut(\A)$ and hence  $\Aut(\A)=\{\Id\}$ in contradiction with the hypothesis. The contradiction means that it is impossible $a_{12} a_{13}\ne 0$.
\end{proof}
\begin{Le}
	If $a_{12}\ne 0$ and  $ a_{13}=a_{23}=0$, then 
	$\Aut(\A)=\{\Id,\rho\}$ where $\rho(e_j)=-e_j+ 2 e_3$ for $j=1,2,3$.
\end{Le}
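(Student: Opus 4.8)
The plan is to write down $\Id(\A)$ explicitly, observe that its shape rigidly pins down every automorphism, and then split into two cases. Put $a:=a_{12}\neq 0$. For $x=x_1e_1+x_2e_2+x_3e_3$ one has $\omega_1(x)=ax_2$, $\omega_2(x)=-ax_1$ and $\omega_3(x)=0$, and feeding this into (\ref{id0}) and (\ref{id1}) gives: $\Id_1(\A)$ is the set of weight-one vectors with $x_1x_2=0$, i.e.\ the union of the two affine lines $L_1=\{\alpha e_1+(1-\alpha)e_3:\alpha\in F\}$ and $L_2=\{\alpha e_2+(1-\alpha)e_3:\alpha\in F\}$, which meet exactly in $L_1\cap L_2=\{e_3\}$; while $\Id_0(\A)$ consists of the single point $v:=\frac{1}{2a}(-e_1+e_2)$, since $\omega_3\equiv 0$ forces $x_3=0$ and the system (\ref{id0}) then has a unique nonzero solution. (For $I=\{1,2,3\}$ this also follows from Lemma \ref{l2}.) Hence $\Id(\A)=L_1\cup L_2\cup\{v\}$.

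Now take $\sigma\in\Aut(\A)$. By the standing assumptions of this section $\sigma(H)=H$, so $\omega\circ\sigma=\omega$, whence $\sigma$ maps $\Id_0(\A)$ onto $\Id_0(\A)$ and $\Id_1(\A)$ onto $\Id_1(\A)$. Since $\Id_0(\A)=\{v\}$, this forces $\sigma(v)=v$. Also $\sigma$ permutes $L_1\cup L_2$, and being a linear bijection it sends each $L_i$ to an affine line contained in $L_1\cup L_2$; because the characteristic of $F$ is not $2$ we have $|F|\geq 3$, so such a line (having at least three points, while two distinct lines meet in at most one point) must coincide with $L_1$ or with $L_2$. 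Thus $\sigma$ either stabilizes both $L_1,L_2$ or interchanges them, and in either case it fixes $L_1\cap L_2=\{e_3\}$; hence $\sigma(e_3)=e_3$.

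It remains to use $\sigma(e_3)=e_3$ and $\sigma(v)=v$ in the two cases. If $\sigma(L_1)=L_1$ and $\sigma(L_2)=L_2$, write $\sigma(e_1)=\alpha e_1+(1-\alpha)e_3$ and $\sigma(e_2)=\beta e_2+(1-\beta)e_3$; then $v=\sigma(v)=\frac{1}{2a}(\sigma(e_2)-\sigma(e_1))$, and comparing the coefficients of $e_1$ and $e_2$ gives $\alpha=\beta=1$, i.e.\ $\sigma=\Id$. If instead $\sigma(L_1)=L_2$ and $\sigma(L_2)=L_1$, write $\sigma(e_1)=\mu e_2+(1-\mu)e_3$ and $\sigma(e_2)=\nu e_1+(1-\nu)e_3$; the same comparison now forces $\mu=\nu=-1$, so $\sigma$ is the single linear map $\rho$ with $\rho(e_3)=e_3$ carrying $e_1,e_2$ to the idempotents $2e_3-e_2$ and $2e_3-e_1$. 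That this $\rho$ really is an automorphism follows either from a direct check of the products $e_ie_j$ ($1\leq i\leq j\leq 3$), or, more cheaply, from the standing hypothesis $\Aut(\A)\neq\{\Id\}$, which already forces the interchanging case to occur. Combining the two cases, $\Aut(\A)=\{\Id,\rho\}$.

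The only step with real content is the first one: the explicit description of $\Id(\A)$, and in particular the facts that $\Id_1(\A)$ is a union of exactly two lines through the single common point $e_3$ and that $\Id_0(\A)$ is a singleton — from there everything is forced by linearity. The remaining points merely to be careful about are the elementary ``a line inside $L_1\cup L_2$ must be one of the two lines'' argument (which needs $|F|\geq 3$, automatic since the characteristic is $\neq 2$) and a consistency check of the explicit formula for $\rho$ against the coefficient computation in the last case.
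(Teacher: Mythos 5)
Your argument is correct, and in the key step it takes a genuinely different (and cleaner) route than the paper. The paper also begins by forcing $\sigma(e_3)=e_3$, but it does so by characterizing $e_3$ as the unique $w\in\Id_1(\A)$ with $wu=(w+u)/2$ for all $u\in\Id_1(\A)$, and it then pins down $\sigma(e_1),\sigma(e_2)$ by the determinant condition $\det_\Phi(v_a,u_b,v_au_b)=a_{12}ab(a-b)=0$ (forcing $a=b$) followed by the explicit product $v_au_a=(\tfrac12+a_{12}a)v_a+(\tfrac12-a_{12}a)u_a$ (forcing $a=\pm1$). You instead exploit the geometry of $\Id_1(\A)=L_1\cup L_2$ (two affine lines meeting only at $e_3$, so any automorphism preserves or swaps them and fixes $e_3$; the cardinality argument via $|F|\ge 3$ is sound since $\operatorname{char}F\ne 2$) and then read off all remaining coefficients at once from the fixed point $v=\tfrac{1}{2a}(-e_1+e_2)$, the unique element of $\Id_0(\A)$. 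This replaces the paper's two computations by a single linear comparison. Your closing remark that surjectivity of the case analysis plus the standing hypothesis $\Aut(\A)\ne\{\Id\}$ already certifies that the interchanging map is an automorphism is logically valid in context, though the direct verification is preferable since it does not lean on that hypothesis.

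One point deserves emphasis rather than the parenthetical treatment you give it: your computation yields $\rho(e_1)=-e_2+2e_3$, $\rho(e_2)=-e_1+2e_3$, $\rho(e_3)=e_3$, which is \emph{not} the map $e_j\mapsto -e_j+2e_3$ stated in the lemma. A direct check confirms that your version is the correct one: with $a=a_{12}\ne 0$ one finds $(-e_1+2e_3)(-e_2+2e_3)=(a-\tfrac12)e_1+(-a-\tfrac12)e_2+2e_3$, which equals $(\tfrac12+a)u+(\tfrac12-a)v$ only when $u=-e_2+2e_3$ and $v=-e_1+2e_3$, i.e.\ only for the map that interchanges $e_1$ and $e_2$; the map fixing each line pointwise up to sign fails to preserve $e_1e_2$ unless $a=0$. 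So the statement of the lemma (and item (ii) of the final theorem) contains a slip, and your proof, besides being correct, locates it. You should state this explicitly rather than leaving it as a ``consistency check'' to be performed.
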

\begin{proof}  Let $\sigma\in \Aut(\A)$. We know that $\Id_1(\A) = \{  e_3, v_a, u_a :  a\in F^*\}$ where $v_a= a e_1 + (1-a)e_3$ and $u_a= a e_2 + (1-a)e_3$. Since  $e_3$ is the unique idempotent $w$ in $\Id_1(\A)$ satisfying $wu=(w+u)/2$ for all $u \in \Id_1(\A)$ we get   $\sigma(e_3) = e_3$. Thus, $\sigma(e_1),\sigma(e_2)\in \{v_a, u_a :  a\in F^*\}$ and $\sigma(e_1)\sigma(e_2) = (\sfrac{1}{2}+a_{12}) \sigma(e_1) + (\sfrac{1}{2}-a_{12}) \sigma(e_2)$. Since $v_av_b=(v_a+v_b)/2$ and  $u_au_b=(u_a+ u_b)/2$  for all $a,b\in F^*$, we obtain that  $\{\sigma(e_1),\sigma(e_2)\} = \{v_a,u_b\}$ for some $a,b\in F^*$. Next, 
\begin{eqnarray*}
&&0=\text{det}_\Phi(v_a,u_b,v_au_b)= a_{12}ab(a-b) \implies a=b,\\
&&v_au_a = \Big(\frac{1}{2}+a_{12} a\Big)v_a+\Big( \frac{1}{2}-a_{12} a\Big)u_a  \implies a= \pm 1.
\end{eqnarray*}  
If $a=1$, then $\sigma = \Id$ and for $a=-1$ we get $\sigma = \rho$. 
 This completes the proof of the lemma.
\end{proof}
	
Summarizing the previous results  we get the following classification.
\begin{teo} Let $\A$ be a Lotka-Volterra algebra over a field $F$ with characteristic different from 2 and  dimension 3. If  $\Aut(\A) \ne \{\Id\}$, then $\A$ is isomorphic to one of the following algebras:  $\A(0,0,0)$, $\A(r,0,0)$, $\A(r,r,0)$, $\A(-\sfrac{1}{2},-\sfrac{1}{2},c)$, $\A(r,r,r)$ and $\A(r,-r,r)$ where $c\in F$ and $r\in F^*$. Furthermore,
\begin{enumerate}[(i)]
	\item $\Aut\big(\A(0,0,0)\big)$ is the set of all linear automorphisms $\sigma$  of $\A$   such that $\sigma(H)=H$.
	\item If $r\ne 0$, then $\Aut\big(\A(r,0,0)\big) =\{\Id, \rho\}$ where $\rho(e_k)=-e_k+ 2e_3$ for $k=1,2,3$.
	\item If $r\ne 0, \pm\sfrac{1}{2}$, then $\Aut((\A(r,r,0) )= \{g_{a,b}\, | a,b\in F, \, a\ne b\},$ where $g_{a,b}(e_1)= e_1$, $g_{a,b}(e_2)= u_a$,
	$g_{a,b}(e_3)= u_b$. 
	\item If $\epsilon=\pm 1$, then 
	$\Aut\big(\A(\sfrac{\epsilon}{2},\sfrac{\epsilon}{2},0 )\big) = \big\{f^{(\epsilon)}_{a,b}, g_{a,b}\, | \, a,b\in F, \, a\ne b\big\}$ where  $f^{(\epsilon)}_{a,b}(e_1)=e_1$ and $f^{(\epsilon)}_{a,b}(e_k)= \epsilon(-e_1+ g_{a,b}(e_k))$ for $k=2,3$.
	\item $\Aut\big(\A(r,r,r)\big) =\{\Id, \sigma_2\}$ for  $r\ne 0,\pm \sfrac{1}{2}$.
	\item $\Aut\big(\A(r,-r,r)\big) =\{\Id, \varrho,\varrho^2\}$ where $\varrho(e_1)=e_{2}$, $\varrho(e_2)=e_{3}$  and $\varrho(e_3)=e_1$ for $r\ne 0$.
	\item If $c\ne 0,\pm\sfrac{1}{2}$, then $\Aut\big(\A(-\sfrac{1}{2},-\sfrac{1}{2},c)\big)=\{\Id,\eta \}$ where $\eta(e_1)=e_1$, $\eta(e_2)= e_1-e_3$ and $\eta(e_3)=e_1-e_2$.
	\item $\Aut\big(\A(-\sfrac{1}{2},-\sfrac{1}{2},-\sfrac{1}{2})\big)=
    \{\Id, \gamma, \gamma^2,\sigma_2, \sigma_2\gamma, \sigma_2\gamma^2\}\cong S_3$
	where  $\gamma(e_1)=e_1$, $\gamma(e_2)=e_1-e_3$ and $\gamma(e_3)=e_2-e_3$. 	
\end{enumerate}
\end{teo}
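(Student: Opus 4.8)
The statement packages together all the case-by-case computations of this section, so the plan is organizational: set up the dichotomy according to whether some automorphism moves the weight hyperplane, and inside each branch reduce an arbitrary associated matrix $\Skew(a_{12},a_{13},a_{23})$ --- via the $S_3$-action of Table~\ref{table:1} --- to one of the six listed normal forms, then quote the lemma that has already computed $\Aut(\A)$ for that form.

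First I would dispose of the case $A=(0)$: then $\A=\A(0,0,0)$ and item (i) is exactly the description of $\Aut(\A)$ recalled at the beginning of Section~3 for the zero matrix. From now on assume $A\ne(0)$ and $\Aut(\A)\ne\{\Id\}$, and split according to whether there is some $\sigma\in\Aut(\A)$ with $\sigma(H)\ne H$. If there is, the dimension-3 lemma treating the case $\sigma(H)\ne H$ already gives $\A\cong\A(-\sfrac{1}{2},-\sfrac{1}{2},c)$ or $\A\cong\A(\sfrac{1}{2},\sfrac{1}{2},c)$ for some $c\in F$. In the second alternative, the lemma on $\A(\sfrac{1}{2},\sfrac{1}{2},c)$ rules out $c\notin\{0,\pm\sfrac{1}{2}\}$ (it would force $\Aut(\A)=\{\Id\}$), the values $c=\pm\sfrac{1}{2}$ collapse by (\ref{rer}) onto $\A(-\sfrac{1}{2},-\sfrac{1}{2},\sfrac{1}{2})$, and $c=0$ gives $\A(\sfrac{1}{2},\sfrac{1}{2},0)$, which is the member $r=\sfrac{1}{2}$ of the family $\A(r,r,0)$. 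The automorphism groups in this branch are then read off directly: (iv) with $\epsilon=1$ from the lemma on $\A(\sfrac{1}{2},\sfrac{1}{2},0)$ and with $\epsilon=-1$ from the lemma on $\A(-\sfrac{1}{2},-\sfrac{1}{2},0)$; (vii) from the lemma on $\A(-\sfrac{1}{2},-\sfrac{1}{2},c)$ for $c\ne0,\pm\sfrac{1}{2}$; and (viii) from the lemma on $\A(-\sfrac{1}{2},-\sfrac{1}{2},-\sfrac{1}{2})$, using $\A(-\sfrac{1}{2},-\sfrac{1}{2},\sfrac{1}{2})\cong\A(-\sfrac{1}{2},-\sfrac{1}{2},-\sfrac{1}{2})$.

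If instead every $\sigma\in\Aut(\A)$ satisfies $\sigma(H)=H$, I would branch on whether $a_{13}=a_{12}+a_{23}$. When that equality holds, the corresponding lemma gives precisely the two outcomes $\A\cong\A(r,-r,r)$ with $\operatorname{char}F=3$ (yielding (vi)) and $\A\cong\A(r,r,0)$ with $r\ne0,\pm\sfrac{1}{2}$ (yielding (iii)). When $a_{13}\ne a_{12}+a_{23}$, Table~\ref{table:1} lets me assume $\A$ has the shape $\A(a,b,c)$ with $abc\ne0$ and $b\ne a+c$, or $\A(a,b,0)$ with $b\ne a$, or $\A(a,0,0)$ with $a\ne0$. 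The first is handled by the lemma producing $\A\cong\A(r,-r,r)$ with $\operatorname{char}F\ne3$ (again (vi)) or $\A\cong\A(r,r,r)$ with $r\ne0,\pm\sfrac{1}{2}$, whose automorphism group $\{\Id,\sigma_2\}$ follows from Lemmas~\ref{laut2} and~\ref{laut3} ((v)); the second reduces, via the lemma forcing $a_{12}a_{13}=0$ together with Table~\ref{table:1}, to the third; and the third is $\A(r,0,0)$ with $\Aut(\A)=\{\Id,\rho\}$ by the lemma on $\A(r,0,0)$ ((ii)). Collecting the branches gives exactly the six families and items (i)--(viii).

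The main obstacle will be bookkeeping rather than any new computation: I must apply Table~\ref{table:1} so that every reduced algebra lands on one and only one listed normal form, track the several isomorphisms among the ``$\pm\sfrac{1}{2}$'' algebras (the chain (\ref{rer}) and $\A(-\sfrac{1}{2},-\sfrac{1}{2},\sfrac{1}{2})\cong\A(-\sfrac{1}{2},-\sfrac{1}{2},-\sfrac{1}{2})$, together with the fact that $\A(\sfrac{1}{2},\sfrac{1}{2},0)$ occurs as the $r=\sfrac{1}{2}$ member of $\A(r,r,0)$ even though it is \emph{not} isomorphic to $\A(-\sfrac{1}{2},-\sfrac{1}{2},0)$), and verify that the parameter ranges $r\in F^*$, $r\ne\pm\sfrac{1}{2}$, $c\in F$ match exactly the hypotheses of each cited lemma. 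Once this dictionary is in place, the theorem is the concatenation of the section's lemmas.
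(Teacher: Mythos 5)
Your proposal is correct and takes essentially the same route as the paper, whose entire proof of this theorem is the sentence ``Summarizing the previous results we get the following classification'': the dichotomy on whether some $\sigma\in\Aut(\A)$ moves $H$, the further split on $a_{13}=a_{12}+a_{23}$, and the reduction via Table~1 to the normal forms treated in the preceding lemmas are exactly how Section~4 is organized. Your additional bookkeeping (the isomorphisms among the $\pm\sfrac{1}{2}$ cases and the placement of $\A(\sfrac{1}{2},\sfrac{1}{2},0)$ inside the family $\A(r,r,0)$) only makes explicit what the paper leaves implicit.
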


 \section{Acknowledgements}
The first author was supported by FAPESP, Proc. 2014/09310-5.


\end{document}